\newtheorem*{thm*}{Theorem}
\newtheorem{thm}{Theorem}[section]
\newtheorem{lemma}[thm]{Lemma}
\newtheorem{prop}[thm]{Proposition}
\newtheorem{teoA}{Theorem}
\newtheorem{teoAprima}{Theorem}
\theoremstyle{definition}
\newtheorem{definition}[thm]{Definition}
\newtheorem*{quest*}{Question}
\theoremstyle{remark}
\newcommand{\RR}{\mathbb{R}}
\newcommand{\TT}{\mathbb{T}}
\newcommand{\W}{\mathcal{W}}
\DeclareMathOperator{\length}{length}
\numberwithin{equation}{section}
\title{Quasi-isometric center action in dimension 3}
\author{Marcielis Espitia} 
\address{Universidad Federal Fluminense, Brasil}
\email{maresno@gmail.com}
\author{Santiago Martinchich} 
\address{IESTA, FCEA, Universidad de la Rep\'ublica, Uruguay}
\email{santiago.martinchich@fcea.edu.uy}
\author{Rafael Potrie} 
\address{Centro de Matem\'atica, Universidad de la Rep\'ublica, Uruguay \& IRL-IFUMI-CNRS}
\email{rpotrie@cmat.edu.uy}
\urladdr{http://www.cmat.edu.uy/~rpotrie/}
\thanks{ M.E. was partially supported by FAPERJ. S.M. was partially supported by Fondo Vaz Ferreira (MEC), PEDECIBA and CSIC. R. P. was partially supported by CSIC I+D project `Estructuras Topol\'ogicas de sistemas parcialmente hiperb\'olicos y aplicaciones'.  }
\begin{document}

\begin{abstract}
We study transitive partially hyperbolic diffeomorphisms in dimension 3 preserving a center foliation on which they act quasi-isometrically. We show that the diffeomorphism is up to finite lift and iterate, either a skew-product or a discretised Anosov flow.

%
\end{abstract}

\maketitle

\section{Introduction}

A diffeomorphism $f:M\to M$ in a closed Riemannian manifold $M$ is called \emph{partially hyperbolic} if it preserves a continuous $Df$-invariant splitting $$TM=E^s\oplus E^c \oplus E^u$$ such that for some positive integer $\ell>0$ one has that 
\begin{center} $
\max\{\|Df^\ell_xv^s\|,\|Df^{-\ell}_xv^u\|\}< 1$ \hspace{0.2cm} and \hspace{0.2cm}
$\|Df^{\ell}_xv^s\|<\|Df^{\ell}_xv^c\|< \|Df^{\ell}_xv^u\|$
\end{center}
for every $x\in M$ and unit vectors $v^s\in E^s(x)$, $v^c\in E^c(x)$ and $v^u\in E^u(x)$.

In this paper we will always assume that $M$ is a closed (i.e. compact, connected, boundaryless) manifold of dimension 3. The classification of (transitive) partially hyperbolic diffeomorphisms in dimension 3 has attracted a lot of attention since the proposal, due to Pujals (and formalised by Bonatti-Wilkinson, see \cite{BW05}), that they could be classified. We refer the reader to \cite{HP18, BFP} for more details on the general classification problem. 

Recently, many new examples have appeared, as noted in  \cite{BPP16, BGP16, BGHP20}, these examples make  the overall classification rather challenging, but there are many instances where a priori information on the dynamics along the center direction is available. In fact, by adding an assumption of topological neutrality on the dynamics on the center, Pujals' conjecture was established in \cite{BZ19}, this assumption forces that dynamics along the center does not expand or contract when iterating forward. Stronger classification results (smooth classification) have been produced by adding additional hypothesis on the derivative of the diffeomorphism, see \cite{BZ19, CPRH21, MM22, AM23}. 

In this paper we take a step further and we relax the topological neutrality condition to a necessary condition to get the desired classification, thus, extending the result of \cite{BZ19} and showing that the class of transitive partially hyperbolic diffeomorphisms which act quasi-isometrically on the center direction, verify the classification conjecture. We remark that this class was studied in \cite{CP22} in all dimensions where they produced some dynamical consequences of this property along the center.

Let us present the results more precisely. 

A key tool in the study of partially hyperbolic diffeomorphisms is the presence of \emph{invariant foliations} $\W^\sigma$ whose leaves are $C^1$ submanifolds tangent to $E^{\sigma}$ for some $\sigma\in\{s,c,u,cs,cu\}$. We will denote as $\W^\sigma_K(x)$ to the set of points in the leaf $\W^\sigma(x)$ through $x$ whose distance to $x$ is less than or equal to $K$ with the metric induced in the leaves by the ambient Riemannian metric. Note that while the existence of $\W^s$ and $\W^u$ is always guaranteed by \cite{HPS77}, the existence of the other foliations may fail in general.

\begin{definition}(Quasi-isometric center action)
A partially hyperbolic diffeomorphism $f:M\to M$ acts \emph{quasi-isometrically} in a $f$-invariant foliation $\W^c$ if there exist constants $r,R>0$ such that $$f^n(\W^c_r(x))\subset \W^c_R(f^n(x))$$ 
for every $x\in M$ and $n\in \mathbb{Z}$. 
\end{definition}

The goal of this article is to classify the (transitive) partially hyperbolic diffeomorphisms acting quasi-isometrically in a center foliation $\W^c$ in $\dim(M)=3$. We remark that the known examples of such systems are those called \emph{skew-products} and those for which there is an iterate which is a \emph{discretised Anosov flow}. These will be defined precisely in \S~\ref{s.prelim}.

A diffeomorphism $f$ is said to be \textit{transitive} if there exists a point whose orbit under $f$ is dense in the entire space. In other words, for transitivity, there is a point $x$ such that the set $\{ f^n(x) \mid n \in \mathbb{Z} \}$ comes arbitrarily close to any point in the space. More generally, we say that $f$ is \emph{chain-recurrent} if for every non-empty open set $U$ such that $f(\overline{U})\subset U$ we have that $U=M$ (we refer the reader to \cite{CPNotas15} for some equivalent definitions).

There is a non-transitive example of a quasi-isometric center action which does not falls in the above categories. It is constructed in \cite{BPP16} from a non-transitive Anosov flow. In \S~\ref{ss.transitive} we explain where the transitivity assumption is used and what are the possible variations of our result without the assumption (involving the concept of \emph{collapsed Anosov flows} from \cite{BFP}).

The main result of this article is:

\begin{teoA}\label{teoA}
Suppose $f:M \to M$ is a partially hyperbolic diffeomorphism acting quasi-isometrically on a center foliation $\W^c$. Then:
\begin{enumerate}
\item If $f$ is chain-recurrent, then either $f$ is a skew-product or an iterate of $f$ is a discretized Anosov flow.
\item  If $\W^c$ has a dense leaf, then an iterate of $f$ is a discretized Anosov flow.
\end{enumerate}
\end{teoA}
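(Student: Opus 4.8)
The plan is to convert the quasi-isometric hypothesis into structural information about the invariant foliations, and then to split according to the topology of the center leaves: compact leaves will give a skew-product, non-compact leaves a discretized Anosov flow. First I would establish dynamical coherence. The branching foliations $\W^{cs}$ and $\W^{cu}$ tangent to $E^c\oplus E^s$ and $E^c\oplus E^u$ exist by the Burago--Ivanov construction, and I would use the quasi-isometric action on $\W^c$ (in the spirit of \cite{CP22}) to forbid branching, so that $\W^{cs}$, $\W^{cu}$ and $\W^c=\W^{cs}\cap\W^{cu}$ are genuine $f$-invariant foliations. Passing to the universal cover $\widetilde{M}$ and lifting $f$ to $\widetilde{f}$, the key coarse consequence of the definition is that every center leaf is \emph{uniformly} quasi-isometrically embedded in $\widetilde{M}$: the center leaves are uniform quasigeodesics, the center leaf space is a simply connected (a priori non-Hausdorff) surface, and $\widetilde{f}$ together with the deck group acts on it. This is precisely the qualitative picture of the orbit structure of a topological Anosov flow, which guides the rest of the argument and parallels the neutral case of \cite{BZ19}.

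Next I would run the dichotomy on leaf topology: a center leaf in a closed $3$-manifold is either a circle or a line. If some leaf is a circle, I would propagate compactness to all center leaves using a Reeb-type stability argument together with the quasi-isometric control, which prevents nearby leaves from spiraling or growing in length, and with chain-recurrence. The center foliation is then a Seifert fibration; $f$ descends to a homeomorphism of the base orbifold and, after the standard reduction to an orientable circle bundle (absorbed, up to an iterate fixing coorientations, into the notion of skew-product), one concludes that $f$ is a skew-product. If no center leaf is compact, then all center leaves are lines and the objective becomes to realize $\W^c$ as the orbit foliation of a topological Anosov flow for which $f$ behaves like a positive-time map.

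The heart of the non-compact case, and the step I expect to be the main obstacle, is to show that $f$ fixes every center leaf after possibly replacing it by an iterate to fix the coorientations of $E^s$, $E^u$ and $E^c$. Here quasi-isometry prevents $\widetilde{f}$ from translating center leaves indefinitely off to infinity, and I would couple this with chain-recurrence: the induced action on the center leaf space is chain-recurrent, while a leaf that is genuinely moved should be pushed monotonically by the contraction of $\W^s$ and the expansion of $\W^u$, contradicting recurrence unless it is fixed; continuity then fixes all leaves. Once $f$ preserves each center leaf, I would invoke the available characterization of discretized Anosov flows (as discussed in \cite{BFP}): a dynamically coherent partially hyperbolic diffeomorphism with one-dimensional non-compact center that fixes its center leaves is a discretized Anosov flow. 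Verifying the hypotheses of that characterization, in particular expansivity of the underlying center flow and the absence of fixed points inside center leaves, both of which should follow from the quasi-isometric neutrality of the center combined with the hyperbolicity of $E^s\oplus E^u$, is the delicate point, as is ruling out residual branching of $\W^{cs},\W^{cu}$.

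Finally, part (2) follows quickly from the machinery of part (1): a dense center leaf cannot be compact, so $\W^c$ has no compact leaf and we are automatically in the non-compact case. Moreover the density provides the recurrence needed to fix the center leaves and yields transitivity of the resulting flow, so an iterate of $f$ is a discretized Anosov flow, with no skew-product alternative arising.
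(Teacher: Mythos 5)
Your overall architecture (coherence, reduce to a flow along the center, fix the center leaves, then apply a characterization of discretized Anosov flows) points in the right direction, but two of your key steps are asserted rather than proved, and they are precisely where the real difficulty lies.

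First, you claim that the quasi-isometric action implies that every center leaf is uniformly quasi-isometrically embedded in $\widetilde{M}$. This does not follow from the definition: the hypothesis controls how \emph{iterates of $f$} distort bounded center arcs, and says nothing a priori about how a center leaf sits inside $\widetilde{M}$ or even inside its $\widetilde{\W}^{cs}$-leaf. The paper has to earn the quasigeodesic property: it first shows (using chain-recurrence or the dense center leaf, via an analysis of exceptional minimal sets and of the action of $\pi_1(M)$ on the $cs$-leaf space) that $\W^{cs}$ and $\W^{cu}$ are minimal, deduces Gromov hyperbolicity of their leaves (Candel plus \cite[Theorem 5.1]{FP2022}), proves that the center leaf space inside each $cs/cu$ leaf is Hausdorff (this is where the quasi-isometric hypothesis is actually used, via a non-separated-leaves argument), and only then invokes \cite[Theorem 11.2]{FP23} to conclude that center leaves are quasigeodesics \emph{inside their $cs/cu$ leaves}. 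None of these intermediate steps appears in your sketch.

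Second, the step ``an iterate of $f$ fixes every center leaf'' is the crux, and your argument for it (a leaf that is moved would be pushed monotonically, contradicting recurrence) does not work as stated. After the collapsed Anosov flow structure is established, $f$ is a self-orbit equivalence of a transitive topological Anosov flow, and such a map can a priori permute orbits in a complicated, non-monotone way without pushing any leaf to infinity. What the quasi-isometric hypothesis gives directly is only that every \emph{periodic} orbit is eventually periodic under $f$ (bounded length plus finiteness of periodic orbits of bounded length). Passing from ``every periodic orbit is $f$-periodic'' to ``some iterate fixes every orbit'' is Theorem \ref{thmBG} (Barthelm\'e--Gogolev), whose proof occupies the appendix and works on the bifoliated orbit space; it is not a consequence of chain-recurrence alone. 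Finally, your compact-leaf branch leans on propagating compactness of one center leaf to all by ``Reeb-type stability,'' which is far from routine for center foliations; the paper sidesteps this entirely by handling the skew-product alternative only through the virtually solvable case via \cite[Main theorem]{HP15}, and by ruling out compact $cs/cu$ leaves (hence the skew-product case) when $\pi_1(M)$ is not virtually solvable.
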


The same conclusion can be obtained in the case $f$ \emph{acts quasi-isometrically} on a \emph{branching center foliation} $\W^c$ (meaning that $\W^c$ is the intersection of two \emph{branching foliations} $\W^{cs}$ and $\W^{cu}$ as given by Burago-Ivanov \cite{BI08}). See Theorem \ref{thmA'} in Section \ref{sectionqionbranching}.  

The proof of Theorem \ref{teoA} requires showing a result (Theorem \ref{thmBG}) about self-orbit equivalences of Anosov flows that may be interesting on its own which was first noticed in \cite{BG2021}; a proof appears in the appendix.

{\small \emph{Acknowledgements:} We want to thank Yuri Lima and Mauricio Polletti for organizing an event in Fortaleza in January 2024 on which the authors met. We thank Yi Shi for discussions that made us aware that the statement of Theorem \ref{thmBG} appeared in \cite{BG2021}. We are deeply thankful to Thomas Barthelm\'e and Andrey Gogolev for letting us include the proof of  Theorem \ref{thmBG} in the appendix. While we were finishing the paper, Ziqiang Feng communicated to us that he had proved a similar result with different techniques \cite{F24}, we thank him for useful discussions. }

\section{Some preliminaries}\label{s.prelim}

Let us first give some definitions. We will say that a partially hyperbolic diffeomorphism is a \emph{skew-product} if it preserves a center foliation by circles of uniformly bounded length (i.e. there is a foliation $\W^c$ tangent to $E^c$ and $f$-invariant so that every leaf is compact and such that the length of every leaf is bounded by a uniform constant). These have been well studied in \cite{BW05,B13,BB16} and up to finite lift the center foliation is actually a circle bundle over $\TT^2$ and the induced dynamics is the quotient is an Anosov homeomorphism. 

A \emph{discretised Anosov flow} is a partially hyperbolic diffeomorphism $f: M \to M$ so that there exists a continuous flow $\varphi_t: M \to M$ with $C^1$-orbits and a continuous map $\tau : M \to \RR_{>0}$ such that $f(x) = \varphi_{\tau(x)}(x)$ for every $x\in M$. It follows a posteriori that the flow $\varphi_t$ is a (topological) Anosov flow whose orbits are tangent to the $E^c$ bundle (and therefore form an invariant center foliation). See \cite{M23} for more details and equivalences. 

In the case where $\pi_1(M)$ is virtually solvable, Theorem \ref{teoA} follows immediately from the classification theorem given in \cite[Main theorem]{HP15}. See Lemma \ref{lemmacasovirtsolv}. 

In the case where $\pi_1(M)$ is not virtually solvable, we will show that $f$ needs to be a discretized Anosov flow. We will first notice that the assumptions allow us to assume that there are invariant foliations for $f$. In fact, the following result holds (see \cite[Proposition 3.7]{M23}): 

\begin{thm}\label{teo-santi}
Let $f: M \to M$ be a partially hyperbolic diffeomorphism of a closed 3-manifold acting quasi-isometrically on a center foliation $\W^c$. Then, $f$ is \emph{dynamically coherent}, that is, there are $f$-invariant foliations $\W^{cs}$ and $\W^{cu}$ tangent respectively to $E^s \oplus E^c$ and $E^c \oplus E^u$ such that $\W^c = \W^{cs}\cap \W^{cu}$. 
\end{thm}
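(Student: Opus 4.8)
The plan is to upgrade the given center foliation $\W^c$ to a full pair of invariant center-stable and center-unstable foliations, and the natural route in dimension $3$ is through Burago--Ivanov's branching foliations. First I would reduce to a convenient setting: after passing to a finite lift (to orient $E^s$, $E^c$, $E^u$) and to a suitable iterate (to make $Df$ preserve these orientations), the quasi-isometric center action is preserved, and dynamical coherence on the lift descends to $f$ by saturating the coherent foliations under the deck group. In this oriented setting I would invoke \cite{BI08} to produce $f$-invariant \emph{branching foliations} $\W^{cs}$ and $\W^{cu}$, tangent respectively to $E^s\oplus E^c$ and $E^c\oplus E^u$, whose leaves are complete surfaces subfoliated by $\W^s$ and $\W^c$ (respectively $\W^u$ and $\W^c$), pairwise non-crossing, and ordered consistently. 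The entire problem then reduces to showing that these branching foliations do \emph{not} branch, i.e. that through each point passes a unique center-stable (resp. center-unstable) leaf; once this is established, $\W^{cs}$ and $\W^{cu}$ are genuine foliations and $\W^c=\W^{cs}\cap\W^{cu}$ by construction.

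To rule out branching I would work in the universal cover $\widetilde{M}$, where lifts of the branching center-stable leaves are properly embedded planes carrying the non-Hausdorff order structure coming from \cite{BI08}. A branching phenomenon means that two distinct such planes $S_1\neq S_2$ coincide along a common center leaf $\gamma$ and separate into the stable direction on one side of $\gamma$. The key is that the quasi-isometric hypothesis gives uniform control on the geometry of center leaves: the inclusion $f^n(\W^c_r(x))\subset \W^c_R(f^n(x))$ prevents center segments from being expanded or contracted beyond a fixed multiplicative constant under iteration, so (as exploited in \cite{CP22}) lifted center leaves are uniformly properly embedded and do not self-accumulate. I would use this to show that the stable holonomy along $\gamma$, and the center holonomy along the stable leaves emanating from $\gamma$, are globally defined and compatible, so that the saturation $\bigcup_{y\in\widetilde{\W}^c(x)}\widetilde{\W}^s(y)$ is a well-defined injectively immersed surface agreeing with $\bigcup_{z\in\widetilde{\W}^s(x)}\widetilde{\W}^c(z)$; this is precisely the completeness condition that forces a single leaf through each point.

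The main obstacle, and the place where the argument is delicate, is exactly this passage from the coarse dynamical control supplied by quasi-isometry to the local statement that $S_1$ and $S_2$ cannot separate. The mechanism I would push is dynamical: iterating the branching configuration forward contracts the stable direction exponentially while, by the quasi-isometric bound, the shared center leaf $\gamma$ stays coarsely isometric in length; comparing the stable displacement between $S_1$ and $S_2$ with the bounded center geometry along the orbit of $\gamma$ should force the transverse separation to vanish, contradicting $S_1\neq S_2$. Care is needed because no transitivity or recurrence is assumed here, so the contradiction must be produced purely from the uniform quasi-isometry constants and the contraction of $E^s$, rather than from returning near an invariant set. Having excluded branching for both families, I would conclude that $\W^{cs}$ and $\W^{cu}$ are genuine $f$-invariant foliations with $\W^c=\W^{cs}\cap\W^{cu}$, establishing dynamical coherence.
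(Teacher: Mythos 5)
First, note that the paper does not prove this statement at all: it is imported verbatim from \cite[Proposition 3.7]{M23}, so there is no in-paper argument to match. Your overall strategy (saturate center leaves by stable leaves and show this produces a genuine foliation, or equivalently kill the branching of the Burago--Ivanov foliations) is the right one, and your second paragraph contains the germ of the actual known proof, namely that $\bigcup_{y\in\W^c(x)}\W^s(y)$ should coincide with $\bigcup_{z\in\W^s(x)}\W^c(z)$ and that this completeness is what rules out branching. But the step that carries all the content is left as a heuristic, and the mechanism you describe for it does not work as stated. Two distinct leaves $S_1\neq S_2$ of a Burago--Ivanov branching $cs$-foliation merge along an $s$-saturated set and separate in the \emph{transverse} (unstable) direction, not ``into the stable direction'' along a shared center leaf; moreover these leaves are only tangent to $E^s\oplus E^c$ and have no a priori relation to the given foliation $\W^c$, so even after excluding branching you would still owe an argument that $\W^c$ is subordinate to them, i.e.\ that $\W^c=\W^{cs}\cap\W^{cu}$ for these particular foliations. ``Comparing the stable displacement between $S_1$ and $S_2$ with the bounded center geometry along the orbit of $\gamma$'' therefore does not compile into an argument: there is no stable displacement between two $cs$-leaves, and the quantity that separates them (unstable distance) grows under the forward iteration you propose.

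The missing idea is the concrete local-to-global product structure argument, which this very paper uses later in the proof of Lemma \ref{lemmaleafspaceHsdff} and which is the engine of \cite[Proposition 3.7]{M23}: fix $x$ and $y\in\W^s(x)$, and a point $z\in\W^c_r(x)$; iterate forward so that $f^n(y)\in\W^s_\delta(f^n(x))$ while, by quasi-isometry, $f^n(z)\in\W^c_R(f^n(x))$; uniform local product structure between $\W^c$ and $\W^s$ at the fixed scale $(\delta,R)$ then forces $\W^c(f^n(y))$ to meet $\W^s(f^n(z))$, and pulling back by $f^{-n}$ shows that the stable saturation of a center leaf equals the center saturation of a stable leaf. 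This is what makes the saturations well-defined complete surfaces tangent to $E^s\oplus E^c$ that partition $M$, with no need for the Burago--Ivanov machinery at all. As written, your proposal names the place where the difficulty sits but does not supply this step, so the proof is not complete.
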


To these invariant foliations we will apply the following result (see \cite[Theorem 11.2]{FP23}): 

\begin{thm}\label{thmFP}
Let $f$ be a partially hyperbolic diffeomorphism in a closed 3-manifold $M$, with
$\pi_1(M)$ not virtually solvable, and such that $f$ admits branching foliations, center
stable ($\W^{cs}$) and center unstable ($\W^{cu}$), both of which have Gromov hyperbolic leaves. Suppose that the center leaf space of $f$ is Hausdorff. Then $f$ is a quasigeodesic partially hyperbolic diffeomorphism, that is, when seen in the universal cover $\widetilde{M}$ of $M$, every center leaf $\ell \in \widetilde{\W}^c$ is a quasi-geodesic inside its leaf of $\widetilde{\W}^{cs}$ and $\widetilde{\W}^{cu}$. 
\end{thm}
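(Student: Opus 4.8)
The plan is to pass to the universal cover $\widetilde{M}$ and to work one leaf at a time. Because $\pi_1(M)$ is not virtually solvable, the lifted branching foliations $\widetilde{\W}^{cs}$ and $\widetilde{\W}^{cu}$ have simply connected leaves; by hypothesis these leaves are Gromov hyperbolic, so each leaf $L\in\widetilde{\W}^{cs}$ is a Gromov hyperbolic plane carrying a well-defined ideal boundary circle $\partial L$. Inside $L$ there live two transverse one-dimensional foliations: the strong stable foliation $\widetilde{\W}^{s}|_{L}$ and the center foliation $\widetilde{\W}^{c}|_{L}$, whose leaves are the components of the intersections $L\cap\widetilde{\W}^{cu}$ and are properly embedded lines. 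The symmetric picture holds in each $\widetilde{\W}^{cu}$-leaf with the unstable foliation in place of the stable one, so by symmetry it suffices to produce uniform quasigeodesic constants for center leaves inside the center-stable leaves.

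First I would establish that the strong leaves are uniform quasigeodesics: stable leaves inside center-stable leaves, and unstable leaves inside center-unstable leaves. This is the step that uses hyperbolicity of $f$ along the strong bundles. The idea is that $f^{n}$ contracts stable leaves uniformly as $n\to+\infty$ while acting by uniform leaf-wise quasi-isometries on the Gromov hyperbolic planes; a curve that is exponentially contracted in arclength cannot ``double back'' in a Gromov hyperbolic plane without contradicting, after enough iterates, the comparison between its intrinsic length and its ambient diameter. Hence each stable leaf converges to two distinct ideal points of $\partial L$ and is a quasigeodesic, with constants made uniform through the cocompactness of the $\pi_1(M)$-action together with compactness of $M$.

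For the center leaves I would argue by contradiction, and this is where the Hausdorff hypothesis is essential. Suppose a center leaf $c\subset L$ is not a quasigeodesic. In a Gromov hyperbolic plane a properly embedded line is a quasigeodesic precisely when its two ends converge to two distinct points of the ideal boundary, so the failure means that $c$ degenerates at infinity — its ends do not separate on $\partial L$. Transversality of $c$ to the stable quasigeodesic foliation then forces $c$ to fellow-travel a ray for infinite time, and such infinite fellow-traveling produces a pair of distinct center leaves that cannot be separated in the center-leaf space. This is exactly the non-Hausdorff configuration excluded by hypothesis; therefore no center leaf can fail to be a quasigeodesic. Invariance under $f$ and the cocompact $\pi_1(M)$-action are then used to upgrade this to uniform quasigeodesic constants valid simultaneously in all center-stable and, symmetrically, all center-unstable leaves.

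The main obstacle I anticipate is the center step: one must show that metric degeneracy of a single center leaf genuinely forces a non-separated pair of center leaves, rather than merely an isolated badly behaved leaf. This requires a careful analysis of how the center foliation interlocks with the transverse strong quasigeodesic foliations near the ideal boundary, and a faithful translation of ``staying at bounded ambient distance from a ray'' into ``non-separated in the leaf space''. A secondary but delicate point is uniformity of the quasigeodesic constants across the non-compact family of leaves, which is what the cocompactness of the $\pi_1(M)$-action is for; without the non-virtually-solvable and Gromov-hyperbolic hypotheses the comparison geometry and its ideal boundary would be absent and the argument would have no foothold.
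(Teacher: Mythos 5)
First, note that the paper does not prove this statement: it is quoted verbatim from \cite{FP23} (Theorem 11.2 there), so there is no internal proof to compare against. Judged on its own terms, your outline contains a concrete error and a fatal gap. The error is the preliminary step asserting that strong stable leaves are uniform quasigeodesics inside center-stable leaves. This is false precisely in the examples this circle of ideas is aimed at: for the time-one map of a geodesic flow (a discretized Anosov flow), a center-stable leaf lifts to a hyperbolic plane foliated by flow lines emanating from a single ideal point, and the strong stable leaves are horocycle-like curves, with both ends converging to that same ideal point; they are not quasigeodesics. Your heuristic that an exponentially contracted curve ``cannot double back'' fails on exactly this example, since horocycles are uniformly contracted by the dynamics and nevertheless double back at infinity. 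Since your center-leaf argument leans on transversality to a strong quasigeodesic foliation, the subsequent step inherits the problem.

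There are two further issues. The asserted equivalence ``a properly embedded line in a Gromov hyperbolic plane is a quasigeodesic precisely when its two ends converge to two distinct ideal points'' is false: distinct ideal endpoints are necessary but not sufficient, as one can attach excursions of unbounded length to a geodesic without changing its endpoints. Upgrading ``distinct, separated ideal points for every leaf'' to ``uniform quasigeodesics'' requires a genuine compactness argument over the foliated manifold, not a leafwise observation. Finally, the step you yourself flag as the main obstacle --- showing that a non-quasigeodesic center leaf forces a non-separated pair in the center leaf space --- is exactly the substance of the Fenley--Potrie theorem; their proof proceeds through a detailed analysis of ideal boundaries of leaves, landing points of rays, and the action of deck transformations, and occupies a significant part of \cite{FP23}. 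As written, your proposal assumes the conclusion of the hard step rather than proving it, so it does not constitute a proof of the theorem.
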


In \S~\ref{sectionqionbranching} we will discuss branching foliations, but let us point out that when $f$ is dynamically coherent (which by Theorem \ref{teo-santi} is the case for the proof of Theorem \ref{teoA}) then $f$ admits branching foliatons (which are in fact true foliations). 

The strategy of our proof will be to show that, on the one hand, leaves of the center-stable and center-unstable foliations are Gromov hyperbolic, and on the other hand, the center foliation when restricted to leaves of each of the foliations is Hausdorff. 

Then using Theorem \ref{thmFP} and \cite{BFP} we know that up to finite cover we can assume that $f$ is what is called a \emph{collapsed Anosov flow} (this will be explained in more detail at the end of the next section) and therefore showing that $f$ is a discretised Anosov flow amounts to showing that an iterate of a \emph{self-orbit equivalence} of the Anosov flow is the trivial self orbit equivalence. This follows from the following result that was first noted in \cite{BG2021}:

\begin{thm}[Barthelmé-Gogolev \cite{BG2021}]\label{thmBG}
Let $h:M\to M$ be a self-orbit equivalence of a transitive Anosov flow $\phi_t$. Suppose that for every periodic orbit $\gamma$ of $\phi_t$ there exists $n>1$ such that $h^n(\gamma)=\gamma$ (as a set). Then there exists $N>0$ such that $h^N$ is trivial (that is, $h^N(o)=o$ for every orbit $o$ of $\phi_t$).
\end{thm}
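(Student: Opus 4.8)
The plan is to split the statement into two independent parts. First I would reduce being \emph{trivial} to an action on periodic orbits: a self-orbit equivalence $h$ is trivial precisely when it fixes every oriented periodic orbit of $\phi_t$. The nontrivial implication uses the density of periodic orbits of a transitive Anosov flow together with continuity of the induced map $\widehat{h}$ on the orbit space $\mathcal{O}\cong\mathbb{R}^2$ of the lifted flow: since $\widehat h$ preserves the stable and unstable foliations of $\mathcal{O}$ and is equivariant with respect to the induced automorphism $h_\ast$ of $\pi_1(M)$, fixing the $\pi_1(M)$-orbits of the dense set of periodic points of $\mathcal{O}$ forces $\widehat h$ to fix every orbit by continuity. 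Granting this, the whole theorem reduces to producing a \emph{uniform} $N$ such that $h^N$ fixes every periodic orbit with its orientation; the hypothesis only provides, for each $\gamma$ separately, some return time $n_\gamma>1$.

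Second, and this is the heart of the matter, I would extract this uniform bound from the induced action on the circle at infinity. As a motivating but insufficient first attempt, one can pass to the action of $h_\ast$ on $H_1(M;\mathbb{Q})$: using that the classes of periodic orbits of a transitive Anosov flow span $H_1(M;\mathbb{Q})$ and that each such class satisfies $(h_\ast)^{n}[\gamma]=[\gamma]$, a power of $h_\ast$ acts trivially on homology; but this invariant is far too coarse to control the infinite set of periodic orbits. Instead I would use Fenley's ideal boundary circle $S^1$ of the orbit space, on which $\pi_1(M)$ acts and on which $h$ induces a homeomorphism $\partial h$. After replacing $h$ by a finite iterate (permitted by the statement) so that $\partial h$ is orientation-preserving, the periodic orbits correspond to a dense set of distinguished points of $S^1$ (ideal endpoints of periodic stable and unstable leaves), and the hypothesis says each such point is periodic under $\partial h$. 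Now classical circle dynamics applies: an orientation-preserving homeomorphism of $S^1$ possessing a periodic point has rational rotation number $p/q$, every one of its periodic points then has period exactly $q$, and a dense set of periodic points forces $(\partial h)^q=\mathrm{id}$. This is precisely the uniformity I want.

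The main obstacle is making this circle argument compatible with the $\pi_1(M)$-equivariance of $\partial h$, since $\partial h$ is equivariant rather than genuinely invariant and therefore carries no a priori rotation number. To handle this I would exploit that each periodic element $g\in\pi_1(M)$ acts on $S^1$ with source--sink (``north--south'') dynamics at the finitely many ideal endpoints of $\gamma$, so that the relation $h^{n}(\gamma)=\gamma$, i.e. $h_\ast^{n}(g)$ conjugate to $g$, becomes a commutation relation forcing $\partial h^{n}$ to preserve these endpoint configurations; converting ``every configuration is periodic'' into a single common period is exactly where the rotation-number mechanism must be deployed. Establishing the orientation-preservation needed to apply that mechanism (hence the legitimacy of passing to a finite iterate) is the other delicate point, and I would also need, as an input, that these ideal endpoints are dense in $S^1$, which follows from transitivity and density of periodic orbits.

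Finally, combining the two parts: a uniform $N$ with $(\partial h)^N$ trivial yields $h^N$ fixing every periodic orbit, and the first reduction upgrades this to $h^N$ being trivial, after absorbing into $N$ the finite factors coming from the choice of orientations and from the torsion of $H_1(M;\mathbb{Z})$.
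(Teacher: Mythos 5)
Your two reductions each conceal the actual difficulty, and neither is carried out. For the first one: you claim that a self-orbit equivalence fixing every (oriented) periodic orbit is trivial ``by continuity'' of the induced map on the orbit space $\mathcal{O}_\phi$. But fixing a periodic orbit $\gamma$ of $\phi_t$ only says that a chosen lift $\widehat h$ satisfies $\widehat h(\widetilde\gamma)=g_{\widetilde\gamma}\cdot\widetilde\gamma$ for some deck transformation $g_{\widetilde\gamma}$ depending on $\widetilde\gamma$, and $\pi_1(M)$ does \emph{not} act properly discontinuously on $\mathcal{O}_\phi\cong\mathbb{R}^2$ (periodic elements have fixed points there), so a limit of points whose $\pi_1(M)$-orbits are preserved need not have its $\pi_1(M)$-orbit preserved. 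Making this continuity step legitimate is precisely what occupies the paper's proof: one first produces a single lift $F$ of some iterate with an honest two-dimensional set of genuinely fixed points in $\mathcal{O}_\phi$ --- a rectangle with fixed corners (Lemmas \ref{lemma2appendix} and \ref{lemma3appendix}, which rest on the discreteness statement of Lemma \ref{lemma1appendix} for orbits of periodic points under the group generated by $\pi_1(M)$ and $F$) --- and only then propagates by stable/unstable saturation, using that each $\mathcal{G}^s$-leaf contains at most one periodic point, together with density of periodic orbits.

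For the second part, you correctly identify the obstruction --- the boundary map $\partial h$ on Fenley's ideal circle is only defined up to composition with the boundary action of $\pi_1(M)$, hence carries no rotation number --- but you do not resolve it; the sentence saying that this ``is exactly where the rotation-number mechanism must be deployed'' is where the proof would have to happen and does not. Note also that the paper's decomposition avoids your hardest step altogether: it never establishes a uniform period for all periodic orbits in advance. Instead it extracts a single exponent $n$ from one periodic orbit (a lozenge corner) via two recurrence arguments inside a compact rectangle of the bifoliated plane, and the subsequent globalization shows that this same iterate is already trivial on every orbit. As written, your proposal is a plan with both of its load-bearing steps left open rather than a proof.
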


In other words, Theorem \ref{thmBG} says that every self-orbit equivalence for which every periodic orbit of the flow is periodic, is in fact a periodic self orbit equivalence. We give a proof of this theorem in Appendix \ref{appendix} with the permission of Barthelm\'e and Gogolev.

\section{Quasi-isometric action on a center foliation}

\subsection{Proof of Theorem \ref{teoA}}

As a standing assumption along this subsection $f$ will be a partially hyperbolic diffeomorphism in a closed $3$-manifold $M$ acting quasi-isometrically on a center foliation $\W^c$.

Let us see first that we can take finite lifts and iterates if needed:

\begin{lemma}\label{lemmafinitelifts}

If $f^k$ for some $k>0$ has a finite lift which is a partially hyperbolic skew-product or a discretized Anosov flow, then $f^k$ is itself a partially hyperbolic skew-product or a discretized Anosov flow.

\end{lemma}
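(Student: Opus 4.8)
The plan is to reduce to a regular (Galois) finite cover and then push the structures down through the covering map. First I would replace the given cover by a regular one: if $q\colon\widehat M\to M$ is the finite cover on which $f^k$ lifts, I pass to the finite regular cover $p\colon\widetilde M\to M$ associated to the normal core of the corresponding finite-index subgroup of $\pi_1(M)$; since $f^k$ lifts to $\widehat M$, its action on $\pi_1$ preserves this normal core, so $f^k$ also lifts to $\widetilde M$. Write $G$ for the finite deck group, so $M=\widetilde M/G$, and let $\widetilde g$ be the lift of $f^k$. Equipping $\widetilde M$ with the pulled-back metric makes $p$ a local isometry; then the lifted foliation $\widetilde\W^c:=p^{-1}(\W^c)$ is $G$-invariant, its leaves project onto leaves of $\W^c$ by local isometries, and $p\circ\widetilde g=f^k\circ p$ (from which $\widetilde g$ normalizes $G$).

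The key step is to identify the center foliation witnessing the skew-product or discretised-Anosov-flow structure of $\widetilde g$ with $\widetilde\W^c$ itself. By Theorem \ref{teo-santi} applied to $f^k$, the map $f^k$ is dynamically coherent with invariant foliations $\W^{cs},\W^{cu}$ and $\W^c=\W^{cs}\cap\W^{cu}$. Their preimages $p^{-1}(\W^{cs})$ and $p^{-1}(\W^{cu})$ are $\widetilde g$-invariant foliations tangent to the lifted bundles, so by uniqueness of the center-stable and center-unstable (branching) foliations they coincide with the corresponding foliations of $\widetilde g$; hence $\widetilde\W^c=p^{-1}(\W^c)$ is exactly the center foliation of $\widetilde g$, and in particular it is $G$-invariant.

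In the skew-product case every leaf of $\widetilde\W^c$ is a circle of length bounded by a uniform constant $C$. Because $p$ restricts to a covering of a compact $1$-manifold onto its image and is a local isometry, each such leaf projects to a circle in $M$ of length at most $C$, and since $\widetilde\W^c=p^{-1}(\W^c)$ these images are precisely the leaves of $\W^c$. Thus $\W^c$ is a center foliation by circles of uniformly bounded length, so $f^k$ is a skew-product.

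In the discretised Anosov flow case I would use the intrinsic characterization of \cite{M23}: a dynamically coherent $f^k$ is a discretised Anosov flow precisely when it fixes every center leaf and acts on each leaf as a fixed-point-free, coherently oriented homeomorphism. The leaf-fixing property descends at once: if $\widetilde g(\widetilde L)=\widetilde L$ for every leaf $\widetilde L$ of $\widetilde\W^c$, then $f^k(L)=p(\widetilde g(\widetilde L))=p(\widetilde L)=L$ for $L=p(\widetilde L)$. The coherent-orientation condition descends similarly, since the displacement function and flow direction determined by $\widetilde g$ are canonical and $G$-equivariant, yielding a flow $\varphi_t$ on $M$ and $\tau\colon M\to\RR_{>0}$ with $f^k=\varphi_\tau$. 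The main obstacle is exactly this last point: verifying that the $\widetilde g$-canonical flow, together with its orientation and time parametrization, is genuinely $G$-equivariant so that it projects, which rests on the uniqueness statements above and requires care along the compact (periodic) center leaves, where one must check that the displacement stays below the relevant systole so that the flow orientation is globally well defined on $M$.
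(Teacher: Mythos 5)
Your overall strategy---lift $\W^c$, identify it with the center foliation witnessing the skew-product or discretized Anosov flow structure upstairs, then push the structure down---has the right shape, and the skew-product half goes through once the identification is made. But there are two genuine gaps. First, you justify the identification by ``uniqueness of the center-stable and center-unstable (branching) foliations''; this is not an available fact (invariant foliations tangent to $E^s\oplus E^c$ need not be unique for a general partially hyperbolic diffeomorphism). The tool that actually does this job is the uniqueness statement of \cite[Proposition 3.7 and Theorem B']{M23}: a partially hyperbolic diffeomorphism acts quasi-isometrically on \emph{at most one} center foliation. Since $\widetilde g$ acts quasi-isometrically both on $p^{-1}(\W^c)$ (inherited from $f^k$) and on the center foliation coming from the skew-product or discretized Anosov flow structure (compact leaves of uniformly bounded length in the first case, \cite[Remark 3.5]{M23} in the second), the two must coincide. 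Your argument needs this substitution to be correct.

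Second, and more seriously, your discretized Anosov flow case is not finished: you yourself flag the descent of the flow, its orientation and its time parametrization through the deck group as ``the main obstacle,'' and a direct construction of $\varphi_t$ downstairs does run into exactly the issue you mention on compact center leaves (a circle leaf upstairs may wrap several times over a shorter circle downstairs). The way to avoid constructing any flow is the metric characterization of \cite[Proposition 3.3]{M23}: $h$ is a discretized Anosov flow if and only if there is a one-dimensional center foliation $\W^c_h$ and a constant $R>0$ with $h(x)\in\W^c_h(x,R)$ for every $x$. Once the center foliations are identified, the condition $\widetilde g(x)\in p^{-1}(\W^c)(x,R)$ projects immediately to $f^k(x)\in\W^c(x,R)$ on $M$, and the converse direction of that proposition concludes. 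Without this (or an equally effective) characterization, the key step of your discretized Anosov flow case remains unproved. (Your reduction to a regular cover is harmless but unnecessary: $p^{-1}(\W^c)$ projects onto $\W^c$ by construction, so no deck-group equivariance is ever needed.)
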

\begin{proof}
Suppose $N$ is a finite cover of $M$ and $g:N\to N$ is a lift of $f^k$ for some $k>0$. 

The foliation $\W^c$ lifts to a $g$-invariant foliation $\W^c_g$. Since the map $f$ acts quasi-isometrically on $\W^c$, so does $f^k$, and it is immediate from this that also $g$ acts quasi-isometrically on $\W^c_g$.

Recall that by \cite[Proposition 3.7 and Theorem B']{M23} every partially hyperbolic diffeomorphism acts quasi-isometrically on at most one center foliation.

In case $g$ is a partially hyperbolic skew-product then $g$ admits a center foliation by compact leaves of uniform length. It follows by the aforementioned uniqueness result that this foliation has to be $\W^c_g$. Since every leaf of $\W^c_g$ is compact with uniform length, the same is the case for $\W^c$. Then $f^k$ is a partially hyperbolic skew-product.

Suppose now that $g$ is a discretized Anosov flow. Recall that \cite[Proposition 3.3]{M23} states that a partially hyperbolic diffeomorphism $h$ is a discretized Anosov flow if and only if there exists a one-dimensional center foliation $\W^c_h$ and a constant $R>0$ such that $h(x)\in \W^c_h(x,R)$ for every point $x$, where $\W^c_h(x,R)$ denotes the ball in $\W^c_h$ of center $x$ and radius $R>0$.  Moreover, $h$ acts quasi-isometrically on $\W^c_h$ (see \cite[Remark 3.5]{M23}). 

Since $g$ acts quasi-isometrically on $\W^c_g$ and $g$ acts quasi-isometrically on at most one center foliation there exists $R>0$ such that $g(x)\in \W^c_g(x,R)$ for every $x\in N$. It is immediate then that $f^k(x)\in \W^c(x,R)$ for every $x\in M$. So $f^k$ is a discretized Anosov flow by the converse of \cite[Proposition 3.3]{M23}.
\end{proof}

The following is an immediate consequence of \cite[Main theorem]{HP15}.

\begin{lemma}\label{lemmacasovirtsolv}
If $\pi_1(M)$ is virtually solvable, then an iterate of $f$ is either a discretized Anosov flow or a partially hyperbolic skew-product. 
\end{lemma}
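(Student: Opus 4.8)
The plan is to feed the hypotheses into the classification of \cite{HP15} and then read off which of the resulting algebraic models are compatible with a quasi-isometric center action. First I would use Lemma~\ref{lemmafinitelifts} to pass to a convenient finite lift and iterate: since that lemma lets me transport the conclusion back to $f$, it suffices to produce a skew-product or a discretized Anosov flow after lifting and iterating. On such a lift the Main Theorem of \cite{HP15} applies: because $\pi_1(M)$ is virtually solvable, $M$ is (finitely covered by) a torus bundle over the circle, $f$ is dynamically coherent, and $f$ is leaf conjugate to an algebraic model $g$ whose center foliation is tangent to $E^c$. By the uniqueness of a center foliation carrying a quasi-isometric action (\cite[Proposition 3.7 and Theorem B']{M23}), the foliation $\W^c$ on which $f$ acts quasi-isometrically must be this $E^c$-foliation, so I may compare it leaf-by-leaf with the center foliation of $g$.

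The algebraic models split into three families according to the behaviour of $g$ along the center: the center is either (i) uniformly expanded or contracted, (ii) a foliation by compact circles, or (iii) a foliation by lines on which $g$ acts isometrically (the suspension/flow case). The next step is to rule out (i) using the quasi-isometric hypothesis. The leaf conjugacy lifts to a homeomorphism $h$ of the universal cover commuting with the (cocompact) deck action, hence $h$ and $h^{-1}$ are uniformly continuous; a short computation then shows that the quasi-isometric property is preserved under conjugation by such a map, so $g$ would have to act quasi-isometrically on its own center as well. But in case (i) a center $r$-ball is stretched to a set of size comparable to $|\lambda_c|^{\pm n} r$ as $n\to\pm\infty$, which is unbounded, so $g$ is not quasi-isometric; this contradiction eliminates (i).

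In case (ii) every center leaf of $f$ is compact, being the homeomorphic image of a circle; after passing to an orientation cover the center foliation is a circle flow, so Epstein's theorem bounds the periods and hence the leaf lengths, and $f$ is by definition a skew-product. In case (iii) the center leaves are non-compact lines and the model $g$ is (a discretization of) a suspension Anosov flow, in particular a discretized Anosov flow with bounded center displacement; transporting this bound through the uniformly continuous conjugacy $h^{-1}$ yields a constant $R>0$ with $f(x)\in\W^c(x,R)$ for every $x$, so $f$ is a discretized Anosov flow by the characterization in \cite[Proposition 3.3]{M23}.

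I expect the only genuinely delicate point to be this last one: verifying the defining condition of each class for $f$ itself rather than for the model $g$. A leaf conjugacy is merely a homeomorphism and distorts leafwise distances, so the bounded-displacement condition of \cite[Proposition 3.3]{M23} (case (iii)) and the bounded-length condition (case (ii)) do not transfer for free. The uniform continuity of the lifted conjugacy together with the uniqueness of the quasi-isometric center foliation is what makes these transfers work, and this is the part I would write out with care; everything else is a direct reading of the model list of \cite{HP15}, which is why the statement is essentially immediate.
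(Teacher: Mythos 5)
Your overall route is the same as the paper's: invoke the Main Theorem of \cite{HP15} to reduce, up to finite lift and iterate, to the three model classes, rule out the Derived-from-Anosov case using the quasi-isometric hypothesis, and descend via Lemma~\ref{lemmafinitelifts}. The genuine problem is the mechanism you use to transfer metric information between $f$ and the algebraic model $g$. A leaf conjugacy $h$ only satisfies $h(f(L))=g(h(L))$ for each center \emph{leaf} $L$; it does not satisfy $h\circ f=g\circ h$ pointwise. Consequently $h(f^n(\W^c_r(x)))$ and $g^n(h(\W^c_r(x)))$ are subsets of the same center leaf of $g$ but a priori located in completely different places on that leaf, so no amount of uniform continuity of $h$ and $h^{-1}$ lets you ``conjugate'' the quasi-isometric property over to $g$ (your case (i)), nor pull a bounded-displacement property of $g$ back to $f$ (your case (iii)). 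This is exactly the step that would fail as written.

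The paper sidesteps this in both places. For the Derived-from-Anosov case it does not use the leaf conjugacy at all: such a diffeomorphism admits a genuine pointwise semiconjugacy $H\circ\widetilde f=A\circ H$ onto the linear Anosov, with $H$ at bounded distance from the identity on the universal cover and sending center leaves onto the unbounded center lines of $A$; since $A$ expands those lines uniformly under forward or backward iteration and $H$ moves points a bounded amount, center arcs of $f$ must grow without bound, contradicting the quasi-isometric action (this is the content of the citation to \cite{P14}, and it rules out a quasi-isometric action on \emph{any} center foliation, so the uniqueness statement from \cite{M23} is not even needed here). For the suspension case, what the leaf conjugacy to the time-one map actually gives is the leafwise statement that $f$ fixes every center leaf; one then concludes that $f$ is a discretized Anosov flow from the characterization of such maps (\cite[Theorem 1.2]{M23}, \cite[Proposition 5.26]{BFP}), or via \cite[Proposition 3.3]{M23} combined with the quasi-isometric hypothesis on $\W^c$ itself --- not by transporting a metric bound through $h^{-1}$. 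Your case (ii) is fine, since compactness of the circle leaves is a purely topological consequence of the leaf conjugacy and the uniform length bound then follows as you say.
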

\begin{proof}
By \cite[Main theorem]{HP15}, a finite iterate and finite lift of $f$ is a map $g:N\to N$, in a finite lift $N$ of $M$, such that $g$ is either a \emph{Derived-from-Anosov} map, a partially hyperbolic skew-product or a discretized Anosov flow.

One can lift $\W^c$ to a $g$-invariant center foliation. It is immediate that $g$ acts quasi-isometrically in this center foliation. A Derived-from-Anosov map cannot act quasi-isometrically on a center foliation because by assumption is semiconjugate to an Anosov with non-bounded center leaves, and thus it expand the center leaves either for forward or backward iterates. See \cite{P14}.  

By Lemma \ref{lemmafinitelifts} one concludes.
\end{proof}

In view of Lemma \ref{lemmafinitelifts} and Lemma \ref{lemmacasovirtsolv} above, we assume from now on that $\pi_1(M)$ is not virtually solvable, that $E^s$, $E^c$ and $E^u$ are orientable and that $f$ preserves their orientations.

We must consider two different hypothesis: (1) $f$ is chain-recurrent or (2) $\W^c$ has a dense leaf. Lemma \ref{Wcsminimal_if_fchiantrans} and Lemma \ref{Wcsminimal_if_Wctrans} below show that (1) or (2) implies that (3) $\W^{cs}$ and $\W^{cu}$ are minimal foliations. In the rest of the section we will show that (3) implies that an iterate of $f$ is a discretized Anosov and this will conclude the proof of the theorem.

\begin{lemma}\label{Wcsminimal_if_fchiantrans}
If $f$ is chain-recurrent then $\W^{cs}$ and $\W^{cu}$ are minimal foliations.
\end{lemma}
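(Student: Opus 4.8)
The plan is to work with one foliation, say $\W^{cs}$, and to obtain minimality of $\W^{cu}$ at the end by the symmetric argument applied to $f^{-1}$ (which interchanges the roles of $E^s$ and $E^u$). First I would invoke Zorn's lemma to produce a $\W^{cs}$-minimal set $\Lambda$, i.e. a nonempty closed $\W^{cs}$-saturated set all of whose leaves are dense in it. The key reduction is that it suffices to prove that $\Lambda$ is \emph{also} $\W^u$-saturated: since $TM=E^s\oplus E^c\oplus E^u$, through each $x\in\Lambda$ the local plaques $\W^{cs}_{loc}(x)$ and $\W^u_{loc}(x)$ span a full neighborhood of $x$, so a set saturated by both $\W^{cs}$ and $\W^u$ is open. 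As $\Lambda$ is also closed and $M$ is connected, this forces $\Lambda=M$, and a $\W^{cs}$-minimal set equal to $M$ means precisely that every center-stable leaf is dense, that is, $\W^{cs}$ is minimal.

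The heart of the matter, and the step where chain-recurrence enters, is therefore to show that a $\W^{cs}$-minimal set is $\W^u$-saturated. Here I would pass to the closed, $f$-invariant, $\W^{cs}$-saturated set $K=\overline{\bigcup_{n\in\mathbb{Z}}f^n(\Lambda)}$ (each $f^n(\Lambda)$ is again $\W^{cs}$-minimal, and closures of saturated sets are saturated), and then to the closure of its unstable saturation $\widehat{K}=\overline{\bigcup_{x\in K}\W^u(x)}$, which is closed, $f$-invariant and $\W^u$-saturated. The strategy is to exhibit $\widehat{K}$ as an attracting set: taking an open set $U$ obtained by thickening $\widehat{K}$ slightly along $\W^{cs}$, the fact that $f$ uniformly contracts along $E^s$, acts quasi-isometrically (hence with uniformly bounded distortion) along $E^c$, and that $\widehat{K}$ already absorbs the expanding $E^u$ direction, should yield $f(\overline{U})\subset U$ for $U$ built at a suitable scale. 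Chain-recurrence then forces $U=M$, whence $\widehat{K}=M$.

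The final step is to transfer this back to genuine $\W^u$-saturation of $\Lambda$: knowing that the unstable saturation of $K$ is dense, I would use the minimality of $\Lambda$ together with the local product structure to upgrade denseness to the inclusion $\W^u_{loc}(x)\subset\Lambda$ for $x\in\Lambda$, for instance by examining backward iterates $f^{-n}$, along which unstable arcs contract onto $\Lambda$ while remaining inside a $\W^{cs}$-saturated invariant set. I expect the main obstacle to be precisely making the trapping estimate rigorous while controlling the center direction: unlike in the uniformly hyperbolic case, $E^c$ is neither contracted nor expanded, so the quasi-isometry hypothesis must be used quantitatively to guarantee that forward iteration does not push the $\W^{cs}$-thickening outside $U$, and to ensure the passage from a dense unstable saturation to actual $\W^u$-invariance of the minimal set. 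Once $\W^{cs}$ is shown to be minimal, the identical argument run for $f^{-1}$, with $E^u$ and $E^s$ interchanged, gives minimality of $\W^{cu}$.
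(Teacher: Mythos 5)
Your overall strategy---produce a proper, closed, $f$-invariant, suitably saturated set, thicken it into a trapping region, and contradict chain-recurrence---is the same as the paper's, but two of your steps have genuine gaps. The serious one is the trapping estimate itself. You thicken the $\W^u$-saturated set $\widehat{K}$ \emph{along $\W^{cs}$} and hope to get $f(\overline{U})\subset U$ from stable contraction together with the quasi-isometric center action. This cannot work: quasi-isometry only gives uniformly \emph{bounded} distortion along $E^c$ (arcs of length $\le r$ go to arcs of length $\le R$, with $R$ typically much larger than $r$), never a strict contraction, so a tubular neighborhood in the center direction is not mapped strictly inside itself at any scale. You correctly flag this as the obstacle, but it is not a technicality to be patched; it forces the saturations to be distributed the other way around. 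The paper starts from a closed, $f$-invariant, proper set that is \emph{already saturated in the center direction} --- for $\W^{cu}$, the union $\Lambda^{cu}$ of its minimal sets, which is $f$-invariant because $f$ permutes the finitely many minimal sets --- and thickens only in the genuinely contracted direction, $U=\bigcup_{x\in\Lambda^{cu}}\W^s_\epsilon(x)$; then some positive iterate sends $\overline{U}$ strictly inside $U$ because $E^s$ is uniformly contracted and the $c$ and $u$ directions are already absorbed by $\Lambda^{cu}$. For $\W^{cs}$ one argues symmetrically, thickening along $\W^u$ and iterating backwards.

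The second gap is properness and the return to $\Lambda$. A trapping region only contradicts chain-recurrence if $U\neq M$, and for your $K=\overline{\bigcup_n f^n(\Lambda)}$ this is not established: a priori a non-minimal foliation could have a dense union of minimal sets. The paper secures properness through codimension-one foliation theory: by [RHRHU11] there are no compact $\W^{cs}$ or $\W^{cu}$ leaves, so all minimal sets are exceptional, and by Hector--Hirsch [HH87, Theorem 4.1.3] there are only finitely many; hence their union is closed, $f$-invariant and nowhere dense. This route also makes your final ``transfer back'' step ($\widehat{K}=M\Rightarrow\Lambda=M$) unnecessary --- which is just as well, since you give no argument for it and $\widehat{K}=M$ does not formally imply anything about $\Lambda$. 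I would restructure along the paper's lines: assume $\W^{cu}$ (say) is not minimal, invoke [RHRHU11] and [HH87] to get a finite, hence $f$-invariant and proper, union of exceptional minimal sets, thicken along $\W^s$, and contradict chain-recurrence directly.
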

\begin{proof} Recall that every minimal sets of a codimension-one foliation on a closed manifold is either the whole manifold, a compact leaf or a third type of minimal set called \emph{exceptional}. The number of exceptional minimal sets is known to be finite (see for example \cite[Theorem 4.1.3]{HH87}). 

Suppose by contradiction that $\W^{cu}$ is not minimal (for the foliation $\W^{cs}$ the reasoning is analogous). By \cite{RHRHU11} there are no compact leaves of $\W^{cu}$. It follows that $\W^{cu}$ has a finite number of minimal sets (which are also exceptional). Denote them $\{\Lambda_1^{cu},\ldots,\Lambda^{cu}_k\}$ and let $\Lambda^{cu}$ denote its union.

Since $f$ permutes the elements of $\{\Lambda_1^{cu},\ldots,\Lambda^{cu}_k\}$ it follows that $f(\Lambda^{cu})=\Lambda^{cu}$. Let  $U$ be the open set $U:=\bigcup_{x\in \Lambda} \W^s_\epsilon(x)$ for some $\epsilon>0$. By the uniform contraction along stable leaves it follows that some positive iterate of $f$ sends the closure of $U$ strictly inside $U$. That is, $U$ is a \emph{trapping region} for this positive iterate of $f$ and this positive iterate is therefore not chain-recurrent. It is an exercise that the non-existence of trapping regions is equivalent to chain-recurrency, and that chain-recurrency is preserved by finite iterates (see for example \cite{CPNotas15}). We obtain that $f$ is not chain-recurrent and this gives us a contradiction.
\end{proof}

By \cite[Proposition 3.7]{M23}, the leaves of $\W^{cs}$ and $\W^{cu}$ satisfy that $\W^{cs}(x)=\bigcup_{y\in \W^c(x)}\W^s(y)$ and $\W^{cu}(x)=\bigcup_{y\in \W^c(x)}\W^u(y)$ for every $x\in M$. By \cite{RHRHU11} no leaf of $\W^{cs}$ or $\W^{cu}$ is compact. It follows that for every $x\in M$, if $\W^c(x)$ intersects $\W^s(x)$ only in $x$ then $\W^{cs}$ is topologically a plane, and if not, then $\W^{cs}(x)$ is topologically a cylinder or a Möbius band.

We denote by $\widetilde{\W}^{cs}$, $\widetilde{\W}^{cu}$ and $\widetilde{\W}^c$ the lifts of $\W^{cs}$, $\W^{cu}$ and $\W^c$ to the universal cover $\widetilde{M}$, respectively.

\begin{lemma}\label{Wcsminimal_if_Wctrans}
If $\W^c$ has a dense leaf then $\W^{cs}$ and $\W^{cu}$ are minimal foliations.
\end{lemma}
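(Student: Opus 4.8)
The plan is to deduce minimality of both $\W^{cs}$ and $\W^{cu}$ from the existence of a single dense center leaf by excluding exceptional minimal sets. The starting observation is that since $\W^c=\W^{cs}\cap\W^{cu}$, one has $\W^c(x_0)\subseteq\W^{cs}(x_0)$ and $\W^c(x_0)\subseteq\W^{cu}(x_0)$ for every $x_0$; hence if $\W^c(x_0)$ is dense then $\W^{cs}(x_0)$ and $\W^{cu}(x_0)$ are dense as well. Thus it suffices to prove the following statement for each of the two codimension-one foliations: \emph{a foliation among $\{\W^{cs},\W^{cu}\}$ possessing a dense leaf is minimal.} I will argue for $\W^{cs}$, the case of $\W^{cu}$ being symmetric, interchanging the roles of forward and backward iteration (and of $E^u$ and $E^s$).

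Arguing by contradiction, suppose $\W^{cs}$ is not minimal. By \cite{RHRHU11} there are no compact leaves, so by \cite[Theorem 4.1.3]{HH87} the only remaining possibility is that $\W^{cs}$ has finitely many minimal sets, all exceptional; fix one of them, $\Lambda$. Being a minimal set, $\Lambda$ is closed, proper and $\W^{cs}$-saturated (hence $\W^c$- and $\W^s$-saturated). Since the dense leaf $\W^{cs}(x_0)$ cannot lie in $\Lambda$ (otherwise $M=\overline{\W^{cs}(x_0)}\subseteq\Lambda$), it is disjoint from $\Lambda$ and accumulates on it. Fixing a small arc $\tau$ transverse to $\W^{cs}$ (that is, in the $E^u$ direction) through a point of $\Lambda$, the trace $K:=\Lambda\cap\tau$ is a Cantor set on which the holonomy acts minimally, while $\W^{cs}(x_0)\cap\tau$ is a single holonomy orbit that is dense in $\tau$ and therefore dense in every gap (connected component of $\tau\setminus K$). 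Note moreover that the dynamics pins $\Lambda$ inside a repeller: since $f^{-1}$ uniformly contracts $E^u$, for a power $f^N$ fixing $\Lambda$ the unstable thickening $V:=\bigcup_{x\in\Lambda}\W^u_\epsilon(x)$ satisfies $f^{-N}(\overline{V})\subset V$, so $\Lambda$ lies in a repeller of $f^N$ with complementary gaps that are wandering and expanded under forward iteration.

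The heart of the matter, and the step I expect to be the main obstacle, is to turn this picture into a contradiction — in other words, to rule out the coexistence of a dense leaf and an $f$-invariant exceptional minimal set. This is where the dynamics must be used in an essential way: pure foliation theory allows, in principle, a dense complementary leaf, so the argument should exploit that $f$ \emph{uniformly} expands the transverse direction $E^u$, making $K$ an invariant Cantor set for a uniformly expanding transverse dynamics. Concretely, the gaps being wandering and uniformly expanded should force any leaf meeting a gap to be pushed definitively away from $\Lambda$, so that it cannot return to every neighbourhood of $\Lambda$ as a dense leaf must; the connectedness of the center leaf (it meets both $V$ and $M\setminus\overline{V}$, hence $\partial V$) together with the quasi-isometric hypothesis, which keeps the leaf at bounded center distance from its iterates, is what prevents it from simultaneously accumulating on $\Lambda$ and remaining disjoint from it across the whole expanding dynamics. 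This contradiction excludes $\Lambda$ and shows $\W^{cs}$ is minimal; the symmetric argument for $\W^{cu}$, using the uniform contraction of $E^s$ transverse to $\W^{cu}$, gives its minimality and completes the proof.
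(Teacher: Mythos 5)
Your reduction is fine (a dense center leaf gives a dense $cs$-leaf and a dense $cu$-leaf, so it suffices to show that each of these codimension-one foliations is minimal as soon as it has a dense leaf), and your setup — no compact leaves by \cite{RHRHU11}, hence finitely many exceptional minimal sets, an $f^N$-invariant one $\Lambda$, a Cantor transversal trace, and an unstable thickening $V$ with $f^{-N}(\overline V)\subset V$ — is all correct. But the proof stops exactly where it needs to start: the contradiction is never derived. You write that the expansion of the gaps ``should force'' the dense leaf away from $\Lambda$ and that quasi-isometry ``is what prevents'' coexistence, but neither claim is substantiated, and the heuristic does not obviously work. Individual leaves of $\W^{cs}$ are not $f$-invariant, so ``the leaf is pushed definitively away from $\Lambda$'' has no clear meaning: $f^{nN}(W)$ is a \emph{different} leaf, and it is again dense (a homeomorphic image of a dense leaf), so nothing prevents it from re-accumulating on $\Lambda$. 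Moreover, unlike in part (1) of the theorem, the existence of the trapping region $V$ is not by itself a contradiction here, since chain-recurrence is not assumed; so some genuinely new input is required, and the proposal does not supply it.

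For comparison, the paper's proof of this lemma takes a completely different — and purely topological — route that avoids the minimal-set classification altogether. Starting from a leaf whose closure $\Lambda$ is not all of $M$, one passes to the leaf space $\mathcal L^{cs}$ of $\widetilde{\W}^{cs}$ and picks a complementary interval $I$ of $\widetilde\Lambda$ with endpoints $L_-,L_+\in\widetilde\Lambda$. Any $\gamma\in\pi_1(M)$ sending a leaf of $I$ into $I$ must preserve $I$ (because $\widetilde\Lambda$ is $\pi_1$-invariant), hence fix or swap $L_\pm$; and since every leaf of $\W^{cs}$ is a plane, a cylinder or a M\"obius band, the stabilizer of $L_\pm$ is trivial or $\mathbb Z$. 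Consequently the $\pi_1(M)$-orbit of any leaf in $I$ accumulates at most at $\{L_-,L_+\}$, so no such leaf is dense, and therefore no center leaf is dense — the desired contradiction. If you want to keep your dynamical strategy you must find an actual argument for the excluded-coexistence step; otherwise the leaf-space argument is the efficient way through.
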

\begin{proof}
Suppose by contradiction that there exists a leaf in $\W^{cs}$ such that its closure $\Lambda\subset M$ is not dense in $M$. We are going to show that every leaf of $\W^{cs}$ cannot be dense in $M$. As a consequence, no orbit of $\W^c$ can be dense either and this will give us a contradiction.

Let us consider $\mathcal{L}^{cs}$ the leaf space of the foliation $\widetilde{\W}^{cs}$ in $\widetilde{M}$. Note that $\pi_1(M)$ acts on $\mathcal{L}^{cs}$. A leaf $L\in \widetilde{\W}^{cs}$ has a dense $\pi_1(M)$-orbit in $\mathcal{L}^{cs}$ if and only if it is the lift of a leaf from $\W^{cs}$ which is dense in $M$.

Let $\widetilde{\Lambda}$ be the lift of $\Lambda$ to $\widetilde{M}$. Note that since $\widetilde{\Lambda}$ is saturated by leaves of  $\widetilde{\W}^{cs}$ it can be seen as subset of the leaf space $\mathcal{L}^{cs}$. Let $I\subset \mathcal{L}^{cs}$ denote an interval in $\mathcal{L}^{cs}$ whose endpoints $L_-$ and $L_+$ lie in $\widetilde{\Lambda}$ but whose interior is disjoint from $\widetilde{\Lambda}$.

Let $L\in \widetilde{\W}^{cs}$ be a leaf in $I$. Note that for every $\gamma\in\pi_1(M)$ such that $\gamma L$ lies in $I$ it follows that $\gamma I$ must be equal to $I$. This is because $\widetilde{\Lambda}$ is invariant by $\gamma$. Then $\gamma$ either fixes or permutes the endpoint $L_-$ and $L_+$.

On the other hand, the elements of $\pi_1(M)$ which leave invariant $L_-$ (or $L_+$) form a subgroup of $\pi_1(M)$ which is either trivial or isomorphic to $\mathbb{Z}$ because every leaf of $\W^{cs}$ is either a plane, a cylinder or a Möbius band.

One obtains that the orbit by $\pi_1(M)$ of $L$ in $I$ has it (possibly empty) accumulation points lying in $\{L_-,L+\}$. Thus the orbit of $L$ cannot be dense in $I$ and as a consequence it is not dense in $\mathcal{L}^{cs}$.
\end{proof}

In view of Lemma \ref{Wcsminimal_if_fchiantrans} and Lemma \ref{Wcsminimal_if_Wctrans}, we will suppose from now on that $\W^{cs}$ and $\W^{cu}$ are minimal foliations. We will show from this that an iterate of $f$ needs to be a discretized Anosov flow.  

Note that by Novikov's theorem each leaf of $\widetilde{\W}^{cs}$ and $\widetilde{\W}^{cu}$ is topologically a plane.

\begin{lemma}\label{lemmaGromovhyp} The leaves of $\widetilde{\W}^{cs}$ and $\widetilde{\W}^{cu}$ are Gromov hyperbolic.
\end{lemma}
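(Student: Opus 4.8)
The plan is to fix a leaf $L=\widetilde{\W}^{cs}(x)$, which by the preceding remark is a topological plane, and to show that $L$ with its induced path metric $d_L$ is Gromov hyperbolic; the argument for $\widetilde{\W}^{cu}$ is symmetric, exchanging forward for backward iteration. Inside $L$ there are two transverse one–dimensional foliations, the stable foliation $\widetilde{\W}^s|_L$ and the center foliation $\widetilde{\W}^c|_L$, and by the identity $\W^{cs}(x)=\bigcup_{y\in\W^c(x)}\W^s(y)$ recalled above, the local product structure between $\widetilde{\W}^s$ and $\widetilde{\W}^c$ globalizes on the plane $L$ to a coordinate $L\cong\RR^2$ whose vertical lines are the stable leaves and whose base horizontal line is the center leaf $\widetilde{\W}^c(x)$. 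Lifting a suitable iterate of $f$ to a diffeomorphism $\widetilde f$ of $\widetilde M$ that preserves orientations and permutes the leaves of $\widetilde{\W}^{cs}$, the two defining properties become uniform: partial hyperbolicity yields constants $C>0$ and $\lambda\in(0,1)$ with $\length(\widetilde f^{\,n}(J))\le C\lambda^n\length(J)$ for every stable arc $J$ and $n\ge 0$, while the quasi–isometric action on $\W^c$ bounds the length distortion of center arcs under all iterates. Compactness of $M$ makes these constants independent of the chosen leaf.

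The mechanism I would exploit is that this data forces $L$ to look like a horocyclically foliated copy of $\mathbb H^2$: the uniform forward contraction makes all stable leaves of $L$ forward–asymptotic, producing a single ideal point toward which they converge, with the center leaves playing the role of horocycles around that point. I would estimate $d_L(p,q)$ by comparing arbitrary paths with \emph{$L$–shaped} concatenations of one stable and one center arc, and use the contraction to show that center detours taken at large forward stable–depth are exponentially cheap, exactly as a horizontal segment high in the upper half–plane model is short. Making this precise gives a comparison between $d_L$ and a hyperbolic–type distance, hence a quasi–isometry of $L$ with a Gromov hyperbolic model, from which thin triangles follow.

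The main obstacle, and the place where the standing hypotheses must enter, is that uniform stable contraction together with a merely quasi–isometric (that is, neutral) center action does \emph{not} by itself force hyperbolicity: the affine model on $\RR^2$ contracting one linear foliation and fixing the transverse one is flat, and this flat alternative is precisely what occurs over virtually solvable $\pi_1$. Thus the crux is to rule out this degeneration, i.e.\ to show that a non–hyperbolic $L$ would contain flat strips of bounded geometry and unbounded width (equivalently, an invariant transverse structure for the contraction), and that such flat behaviour of the minimal foliations $\W^{cs},\W^{cu}$ forces $\pi_1(M)$ to be virtually solvable, contradicting our standing assumption. Here I would use minimality to propagate local flatness across the foliation and to exclude an exceptional invariant set, the absence of compact leaves from \cite{RHRHU11}, and the classification of the resulting model manifolds. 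The technical heart is converting the \emph{global, leaf–jumping} dynamical estimates for $\widetilde f$ into \emph{uniform, intrinsic} estimates valid inside a single fixed leaf at every scale, for which the local product boxes afforded by compactness of $M$ together with a telescoping argument along forward iterates are the natural tool.
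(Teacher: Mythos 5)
Your proposal is a program rather than a proof, and its central step is left undone. You correctly identify that uniform stable contraction together with a neutral (quasi-isometric) center action is compatible with a flat leaf (the affine model on $\RR^2$), so that the entire content of the lemma is to exclude this degeneration; but at exactly that point you only describe what you \emph{would} show --- that a non-hyperbolic leaf contains flat strips of unbounded width, that minimality propagates flatness, and that this forces $\pi_1(M)$ to be virtually solvable --- without giving an argument for any of these implications, none of which is routine. Even the ``hyperbolic-looking'' half of your plan (comparing $d_L$ with an upper-half-plane metric via $L$-shaped paths) is not carried out: the dynamical estimates you invoke move you off the leaf (to $\widetilde f^{\,n}(L)$), and you yourself flag the conversion to uniform intrinsic estimates on a single fixed leaf as ``the technical heart'' without supplying it. In addition, the global coordinate $L\cong\RR^2$ with stable leaves as verticals and center leaves as horizontals presupposes a global product structure of $\widetilde{\W}^c$ and $\widetilde{\W}^s$ inside $L$; the identity $\W^{cs}(x)=\bigcup_{y\in\W^c(x)}\W^s(y)$ does not give this by itself, and the Hausdorffness of the center leaf space inside $L$ is a separate lemma proved \emph{after} this one in the paper.

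The paper's proof takes a completely different, two-line route: it splits according to whether $\W^{cs}$ admits a transverse invariant measure. If it does not, Candel's uniformization theorem \cite{C93} makes the leaves hyperbolic, hence Gromov hyperbolic; if it does, Gromov hyperbolicity of the leaves is exactly the content of \cite[Theorem 5.1]{FP2022}. The degenerate case you struggle with corresponds to the transverse-invariant-measure case, which the paper outsources to that reference rather than reproving. As written, your proposal does not constitute a proof of the lemma.
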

\begin{proof}
By Candel's uniformization theorem (see \cite{C93})  if there is no transverse invariant measure for $\W^{cs}$, then, the leaves of $\widetilde{\W}^{cs}$ are Gromov hyperbolic. In case $\W^{cs}$ admits a transverse invariant measure, \cite[Theorem 5.1]{FP2022} shows that the leaves of $\widetilde{\W}^{cs}$ need to be Gromov hyperbolic. For $\widetilde{\W}^{cu}$ the argument is analogous.
\end{proof}

Given a leaf $L$ in $\widetilde{\W}^{cs}$ or $\widetilde{\W}^{cu}$, the leaves of $\widetilde{\W}^c$ subfoliate $L$. We denote this foliation $L_{\widetilde{\W}^c}$ and we denote by  $\mathcal{O}^c_L$ to its leaf space.

\begin{lemma}\label{lemmaleafspaceHsdff} The leaf space $\mathcal{O}^c_L$ is Hausdorff for every leaf $L$ in $\widetilde{\W}^{cs}$ or $\widetilde{\W}^{cu}$.
\end{lemma}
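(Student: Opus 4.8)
The plan is to argue by contradiction, treating the case $L\in\widetilde{\W}^{cs}$ in detail; the case $L\in\widetilde{\W}^{cu}$ is identical after replacing $f$ by $f^{-1}$, which exchanges the roles of the contracting and expanding transverse directions. Recall that by Novikov $L$ is a topological plane, and that inside $L$ we have two transverse one-dimensional foliations: the restriction $L_{\widetilde{\W}^c}$ of the center foliation, and the restriction of the stable foliation $\widetilde{\W}^s$, which subfoliates $L$ because $\W^{cs}(x)=\bigcup_{y\in\W^c(x)}\W^s(y)$. Suppose $\mathcal{O}^c_L$ is not Hausdorff. Then there are two distinct center leaves $c_1,c_2\subset L$ that are \emph{non-separated}: there is a sequence of center leaves $c_n$ converging to $c_1$ on compact sets and also to $c_2$ on compact sets.

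First I would record a \emph{product-structure} statement for this pair of transverse foliations on the plane $L$. Since center and stable leaves are everywhere transverse and $L$ is simply connected, no center leaf and stable leaf can bound a bigon: a bigon would force, by a Poincar\'e--Hopf index count on the bounded disk, an interior tangency between the two foliations, which is impossible. Consequently each stable leaf meets each center leaf at most once, and since every stable leaf of $L$ crosses the fixed center leaf $c_0=\widetilde{\W}^c(x)$ with $L=\bigcup_{y\in c_0}\widetilde{\W}^s(y)$ exactly once, the stable leaf space of $L$ is homeomorphic to $\mathbb{R}$. Projecting $L$ to this $\mathbb{R}$ along stable leaves exhibits every center leaf as a graph over an interval and translates the non-separation of $c_1,c_2$ into the statement that $c_1$ and $c_2$ are both asymptotic to a common stable leaf $s_\ast$ along one of its two ends $\xi$, while the leaves $c_n$ run alongside $c_1$ on one side of $s_\ast$ and alongside $c_2$ on the other.

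The heart of the argument, and the step I expect to be the main obstacle, is to turn this branching configuration into a contradiction with the quasi-isometric action. The plan is to make the configuration dynamical. Since $L$ is typically not $\widetilde{f}$-invariant, I would first note that branching of the center foliation inside a center-stable leaf is preserved by $\widetilde{f}$ (which carries center and stable leaves to center and stable leaves) and by deck transformations; hence the set $B\subset M$ of points whose center-stable leaf has non-Hausdorff center leaf space is $f$-invariant, saturated by $\W^{cs}$, and closed. By minimality of $\W^{cs}$ (Lemma \ref{Wcsminimal_if_fchiantrans}/\ref{Wcsminimal_if_Wctrans}) this forces $B=M$, so that branching occurs in \emph{every} center-stable leaf. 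I would then exploit recurrence together with the two competing estimates along the end $\xi$ of $s_\ast$: forward iteration contracts the stable transversals separating $c_1$ from $c_2$, so the two leaves become arbitrarily stable-close along $\xi$, whereas the quasi-isometric hypothesis $f^n(\W^c_r(x))\subset\W^c_R(f^n(x))$ bounds the distortion of center segments, so the portions of $c_1$ and $c_2$ running out to $\xi$ cannot be pulled apart. Making this quantitative---choosing basepoints escaping along $\xi$, comparing the stable holonomy from $c_1$ to $c_2$ against the uniformly controlled center length, and iterating until the two limit leaves are forced to coincide---should contradict $c_1\neq c_2$.

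The delicate points I anticipate are: (i) justifying that $B$ is closed, i.e. that non-Hausdorff branching is a closed condition on leaves, which needs a careful limiting argument for the transverse foliation; and (ii) extracting a genuine contradiction from the interplay between stable contraction and quasi-isometric control \emph{without} invoking Theorem \ref{thmFP}, whose Hausdorffness hypothesis is precisely what we are proving. If the direct contraction estimate proves awkward, an alternative is to show that branching everywhere produces a proper $\W^{cs}$-saturated trapping region, contradicting the chain-recurrence/minimality already in force, in the spirit of the proof of Lemma \ref{Wcsminimal_if_fchiantrans}.
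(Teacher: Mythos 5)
Your proposal correctly identifies the two competing forces that drive the proof (stable contraction versus the quasi-isometric bound on center segments) and correctly records the product-structure fact that a center leaf meets a stable leaf of $L$ at most once, which the paper also uses. But the proposal stops exactly where the proof has to happen: you write that making the interplay quantitative ``should contradict $c_1\neq c_2$'' and you yourself list this extraction as the main anticipated obstacle. That step is the entire content of the lemma, and it is left unexecuted. Moreover, the detour you propose (showing that the set $B$ of leaves with branching is closed, invariant and saturated, hence all of $M$ by minimality) is both delicate for the reason you flag and unnecessary: the argument never needs $L$ to be $\widetilde f$-invariant, because one simply pushes the configuration forward into whatever leaf $\widetilde f^n(L)$ happens to be, using estimates that are uniform over the compact manifold, and then pulls back. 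Also, the endpoint you aim for (``the two limit leaves are forced to coincide'') is not the contradiction that actually materializes.

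Here is the mechanism that closes the argument. Non-separation of $W$ and $W'$ gives, for stable arcs $\eta_s\ni x\in W$ and $\eta_s'\ni x'\in W'$, a center arc $\eta_c$ of bounded length meeting $\eta_s$ at $y$ and $\eta_s'$ at $y'$. Quasi-isometry bounds $\length(\widetilde f^n(\eta_c))$ by some $R$ for all $n\geq 1$. By compactness of $M$ and continuity of the foliations there is a uniform $\delta>0$ such that whenever $z,z'$ are joined by a center arc of length at most $R$ and $w\in\widetilde{\W}^s_\delta(z)$, the center leaf through $w$ crosses $\widetilde{\W}^s(z')$. For $n$ large, stable contraction puts $\widetilde f^n(\eta_s)\subset\widetilde{\W}^s_\delta(\widetilde f^n(y))$, so $\widetilde f^n(W)$ crosses $\widetilde{\W}^s(\widetilde f^n(y'))$; pulling back by $\widetilde f^{-n}$, the leaf $W$ itself crosses $\widetilde{\W}^s(y')$. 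But $W'$ also crosses $\widetilde{\W}^s(y')$ (at $x'$), so one can choose disjoint subarcs $I,I'$ of $\widetilde{\W}^s(y')$ transverse to $W$ and $W'$ respectively; by your own ``at most one intersection'' observation no center leaf can meet both, so $W$ and $W'$ are separated --- contradicting the hypothesis. Your sketch has the right ingredients but, as written, contains a genuine gap at the decisive step.
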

\begin{proof}

Suppose without loss of generality that $L$ is a leaf in $\widetilde{\W}^{cs}$. To show that $\mathcal{O}^c_L$ is Hausdorff it is enough to show that $L_{\widetilde{\W}^c}$ does not have a pair of \emph{non-separated leaves}. That is, that there is no pair of leaves $W$ and $W'$ such that for every pair of transversals $I$ to $W$ and $I'$ to $W'$ there exists a leaf of $L_{\widetilde{\W}^c}$ intersecting both $I$ and $I'$.

Suppose by contradiction that there exist non-separated leaves $W$ and $W'$. Let $x\in W$ and $x'\in W'$. Let $\eta_s$ and $\eta_s'$ be two stable arcs through $x$ and $x'$, respectively. Since $W$ and $W'$ are non-separated there exists a center arc of bounded length $\eta_c$ intersecting both $\eta_s$ and $\eta_s'$. Let $y$ and $y'$ denote these two points of intersection, respectively. See Figure \ref{fig:Hausdorff1}.

\begin{figure}[h] 
	\centering
	\includegraphics[width=0.5\textwidth]{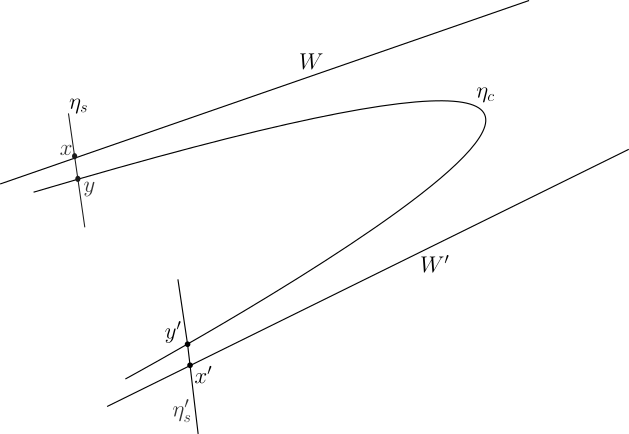}
	\caption{Construction of the arc $\eta_c$ and the points $y,y'$.} 
	\label{fig:Hausdorff1} 
\end{figure}

Let $\widetilde{f}$ be a lift of $f$. Because of the quasi-isometric action on the center the length of $\widetilde{f}^n(\eta_c)$ is bounded uniformly on $n$, for every $n\geq 1$. Say by a constant $R>0$.

By continuity of the foliation $\widetilde{\W}^c$ in the compact manifold $M$ there exist $\epsilon>0$ such that the following is satisfied: For any pair of points $z$ and $z'$ joined by a center arc of length less than $R$, if $w$ is a point in $\widetilde{\W}^s_\delta(z)$ then the center leaf through $w$ intersects $\widetilde{\W}^s(z')$.

By taking $n$ large enough one obtains that $\tilde{f}^n(\eta_s)$ is contained in $\widetilde{\W}^s_\delta(\tilde{f}^n(y))$. It follows that $\tilde{f}^n(W)$ intersects $\widetilde{\W}^s(\tilde{f}^n(y'))$. See Figure \ref{fig:Hausdorff2}. 
\begin{figure}[h] 
	\centering
	\includegraphics[width=0.5\textwidth]{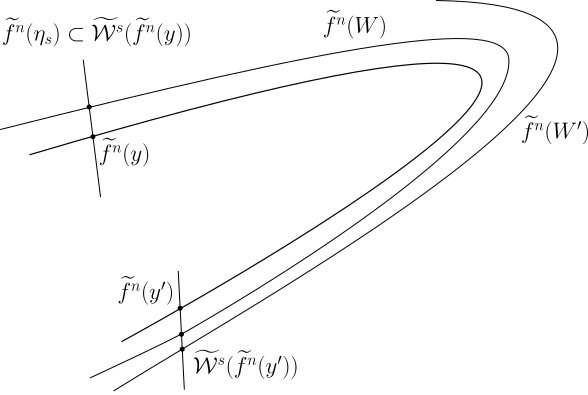}
	\caption{Continuity of holonomy forces $\tilde{f}^n(W)$ to intersect  $\tilde{\W}^s(\tilde{f}^n(y'))$.}
	\label{fig:Hausdorff2}
\end{figure}

Iterating backwards $n$ times one obtains that $W$ intersect $\tilde{\W}^s(y')$. Note that $W'$ also intersects $\tilde{\W}^s(y')$ in $x'$. One can consider $I$ and $I'$ disjoint subarcs of $\tilde{\W}^s(y')$ such that $I\cap W\neq \emptyset$ and $I'\cap W'\neq \emptyset$. No center leaf can intersect $I$ and $I'$ because it would intersect $\tilde{\W}^s(y')$ twice. It follows that $W$ and $W'$ are separated. A contradiction.
\end{proof}

Lemma \ref{lemmaGromovhyp} and Lemma \ref{lemmaleafspaceHsdff} allow us to use Theorem \ref{thmFP} to obtain that the leaves of $\widetilde{\W}^c$ are uniformly quasigeodesics inside the leaves of $\widetilde{\W}^{cs}$ and $\widetilde{\W}^{cu}$. This is what is called a \emph{quasigeodesic partially hyperbolic diffeomorphism} in \cite{BFP}. 

Since the bundles  $E^s$ and $E^u$ are orientable, it follows from \cite[Theorem D]{BFP} that $f$ is a \emph{leaf space collapsed Anosov flow}. In particular, since $f$ is dynamically coherent, the foliation $\W^c$ is given by the flow lines of a topological Anosov flow $\phi^c_t:M\to M$. 

Indeed, thanks to our assumptions, we know that leaves of $\W^{cs}$ and $\W^{cu}$ are dense, and therefore, we deduce that the topological Anosov flow $\phi^{c}_t$ is transitive (and therefore orbit equivalent to a true Anosov flow by \cite{S21}, though we will not use it).

It follows that $f$ is a \emph{self-orbit equivalence} of the topological Anosov flow $\phi^c_t:M\to M$. That is, $f$ sends orbits of $\phi^c_t:M\to M$ to orbits of itself, and preserving their orientations.

It is a well-known fact that for every topological Anosov flow, for every $R>0$ the number of periodic orbits of length less than $R$ is finite.

Since $f$ acts quasi-isometrically on $\W^c$, then for every periodic orbit $\gamma$ of  $\phi^c_t$ there exists $R_\gamma>0$ such that the length of $f^n(\gamma)$ is less than $R_\gamma>0$ for every $n \in \mathbb{Z}$. As the number of periodic orbits of length less than $R_\gamma>0$ is finite one obtains that $f^k(\gamma)=\gamma$ for some $k> 1$ depending on $\gamma$.

That is, the self-orbit equivalence $f$ has the property that every periodic orbit of $\phi^c_t$ is periodic by $f$. By Theorem \ref{thmBG} one obtains that there exists $K>0$ such that $f^K$ is the trivial self-orbit equivalence. That is, $f^K$ leaves invariant every orbit of $\phi^c_t$.

By \cite[Proposition 5.26]{BFP} (see also \cite[Theorem 1.2]{M23}) one obtains that $f^K$ is a discretized Anosov flow. This completes the proof of Theorem \ref{teoA}. 

\subsection{The transitivity assumption}\label{ss.transitive}

We close this section by making some comments on the transitivity assumption. As we mentioned in the introduction, examples in \cite{BPP16} show that the same statement cannot hold without assuming some form of transitivity. Still, one can wonder if every partially hyperbolic in dimension 3 acting quasi-isometrically on a center foliation must be a \emph{collapsed Anosov flow}. 

We remark that we have used the transitivity in two points in the proof. The crucial one is to show that the flow $\phi^{c}_t$ is transitive to be able to apply Theorem \ref{thmBG}. Theorem \ref{thmBG}  is false without the transitivity assumption (as the examples in \cite{BPP16} show). The other place was to get the minimality of the $\W^{cs}$ and $\W^{cu}$ foliations, which with the help of \cite[Theorem 5.1]{FP2022} allowed to show the Gromov hyperbolicity of center stable and center unstable leaves. 

We believe that the same should hold for every partially hyperbolic diffeomorphism, namely, that if $f: M \to M$ is a (dynamically coherent) partially hyperbolic diffeomorphism on a closed 3-manifold with non-virtually solvable fundamental group, then, the leaves of $\W^{cs}$ and $\W^{cu}$ are Gromov hyperbolic. We have not pursued this problem, that would give that such diffeomorphisms must all be collapsed Anosov flows.

\section{Quasi-isometric action on a branching center foliation}\label{sectionqionbranching}

By Burago-Ivanov \cite{BI08}  under orientability assumptions there exist branching $cs$ and $cu$ foliations. We say that $\W^c$ is a \emph{branching center foliation} if it is the intersection of a branching $cs$ and a branching $cu$ foliation. In this technical section, we shall assume familiarity with branching foliations (see also \cite[\S 3]{BFP} for a presentation in this precise context). 

\begin{definition}
We say that a partially hyperbolic diffeomorphism $f:M\to M$ \emph{acts quasi-isometrically} on a branching center foliation $\W^c$ if there exist constants $r,R>0$ such that for every segment $\eta$  in a leaf of $\W^c$ such that $\length(\eta)\leq r$ one has that $$\length(f^n\eta)\leq R$$ 
for every $n\in \mathbb{Z}$. 
\end{definition}

We say that a branching foliation $\W$ is a \emph{true foliation} if no branching point exists. The goal of this section is to show the following:

\begin{teoAprima}\label{thmA'}
Suppose $f:M\to M$ is a partially hyperbolic diffeomorphism acting quasi-isometrically on a branching center foliation $\W^c$. Then $\W^c$ is a true foliation and:
\begin{enumerate}
\item If $f$ is chain-recurrent, then either $f$ is a skew product or an iterate of $f$ is a discretized Anosov flow. 
\item  If $\W^c$ has a dense leaf, then an iterate of $f$ is a discretized Anosov flow.
\end{enumerate}
\end{teoAprima}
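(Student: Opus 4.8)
The plan is to mirror the proof of Theorem \ref{teoA} (the non-branching case), which I have just read, and reduce Theorem \ref{thmA'} to it by first establishing that the branching center foliation $\W^c$ is in fact a \emph{true} foliation. Once branching is ruled out, the branching $cs$ and $cu$ foliations become genuine $f$-invariant foliations $\W^{cs}$ and $\W^{cu}$ with $\W^c = \W^{cs}\cap\W^{cu}$, and the entire argument of \S\ref{sectionqionbranching}'s predecessor applies verbatim: minimality of $\W^{cs},\W^{cu}$ under hypothesis (1) or (2), Gromov hyperbolicity of the lifted leaves (Lemma \ref{lemmaGromovhyp}), Hausdorffness of the center leaf space inside each leaf (Lemma \ref{lemmaleafspaceHsdff}), then Theorem \ref{thmFP}, \cite[Theorem D]{BFP}, and finally Theorem \ref{thmBG} to collapse a self-orbit equivalence to the trivial one and conclude via \cite[Proposition 5.26]{BFP}. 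The virtually solvable case is again dispatched by Lemma \ref{lemmacasovirtsolv} (after checking the branching version of the finite-lift reductions), so I would first reduce to $\pi_1(M)$ not virtually solvable and to orientable, orientation-preserved bundles.

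\textbf{Ruling out branching.} The heart of the new content is showing $\W^c$ has no branching points. I would argue as follows. The quasi-isometric hypothesis says that every short center segment has uniformly bounded forward and backward iterates. A branching point of $\W^c$ (equivalently of $\W^{cs}$ or $\W^{cu}$, since $\W^c=\W^{cs}\cap\W^{cu}$) produces, in the universal cover, two distinct center leaves sharing a point and then separating. The strategy is to show that such a configuration is incompatible with the quasigeodesic behavior forced by the dynamics. Concretely, I would lift to $\widetilde M$, consider two branching center leaves $\ell,\ell'$ through a common point, and use the quasi-isometric action to show that both $\widetilde f^n(\ell)$ and $\widetilde f^n(\ell')$ remain quasigeodesics in a common $\widetilde\W^{cs}$-leaf with uniformly bounded backtracking; two quasigeodesics emanating from the same point in a Gromov hyperbolic plane that stay a bounded Hausdorff distance apart on one side cannot branch, which contradicts the existence of the branching point. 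This is essentially the mechanism by which \cite[\S 3]{BFP} and the surrounding theory show that quasigeodesic branching foliations have no branching; I would invoke that framework, having first verified the needed Gromov hyperbolicity and Hausdorffness inputs hold in the branching setting.

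\textbf{The main obstacle} is that several of the lemmas I want to reuse (Lemmas \ref{Wcsminimal_if_fchiantrans}, \ref{Wcsminimal_if_Wctrans}, \ref{lemmaGromovhyp}, \ref{lemmaleafspaceHsdff}) were stated and proved assuming $\W^{cs},\W^{cu}$ are honest foliations, whereas a priori I only have branching foliations. So there is a genuine chicken-and-egg tension: I want no-branching in order to apply the lemmas, but the cleanest route to no-branching seems to pass through Gromov hyperbolicity and Hausdorffness, which are exactly what those lemmas supply. The way to break the circularity is to observe that the arguments for Gromov hyperbolicity (Candel's theorem and \cite[Theorem 5.1]{FP2022}) and for the Hausdorff leaf space (the holonomy-continuity argument of Lemma \ref{lemmaleafspaceHsdff}) work equally well for branching foliations, since \cite{BFP} develops all of this machinery precisely at the level of branching foliations; none of those proofs actually used the absence of branching, only the codimension-one structure and the dynamics. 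Hence I would restate those lemmas in the branching category, check their proofs go through unchanged, apply Theorem \ref{thmFP} to get the quasigeodesic property, and only then deduce absence of branching from the quasigeodesic property together with \cite[Theorem D]{BFP}. After that, the foliation being true, the remaining conclusions (1) and (2) follow exactly as in Theorem \ref{teoA}, so the proof will amount to a careful bookkeeping that each step tolerates branching until branching is eliminated, rather than to any fundamentally new estimate.
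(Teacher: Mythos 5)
Your overall reduction is the right one: once $\W^c$ is known to be a true foliation, Theorem \ref{thmA'} follows from Theorem \ref{teoA} exactly as you describe, and this is how the paper proceeds. The gap is in your mechanism for ruling out branching. You propose to first establish Gromov hyperbolicity and Hausdorffness of the center leaf space for the branching foliations, apply Theorem \ref{thmFP} to get the quasigeodesic property, and then deduce absence of branching from quasigeodesity together with \cite[Theorem D]{BFP}. But the quasigeodesic property does not rule out branching: the whole point of the framework of \cite{BFP} is to cover quasigeodesic partially hyperbolic diffeomorphisms whose branching foliations genuinely branch (e.g.\ the examples of \cite{BGHP20}), so there is no theorem of the form ``quasigeodesic branching foliations have no branching'' to invoke. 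Two distinct quasigeodesics in a Gromov hyperbolic leaf can perfectly well share a point (or a ray) and then separate, so the bounded-backtracking contradiction you sketch does not materialize. This route also inherits the circularity you yourself flag: the minimality and Hausdorffness lemmas as proved use that leaves are planes, cylinders or M\"obius bands and invoke \cite{RHRHU11}, and their branching versions would have to be re-established from scratch.

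The paper avoids all of this with a direct, elementary argument (the proposition in Section \ref{sectionqionbranching}), which uses only the quasi-isometric center action and the expansion along $E^u$. A branching point $x$ of $\W^{c,L}$ in a leaf $L$ of $\W^{cu}$ yields two short center arcs $\eta_c,\eta'_c$ from $x$ ending at the two endpoints of a small unstable arc $\eta_u$; the closed ``triangle'' $\overline{V}$ they bound is swept by center arcs from $x$ to $\eta_u$ of length at most some $r$, so by the quasi-isometric hypothesis $f^n(\overline{V})$ stays in a ball of uniformly bounded radius $R$ inside $f^n(L)$ for all $n\in\mathbb{Z}$. But $V$ has nonempty interior, hence contains a nontrivial unstable arc $\gamma_u$ with $\length(f^n(\gamma_u))\to\infty$. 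Covering an $R$-ball in $f^n(L)$ by a uniformly bounded number of foliation boxes of $\W^u$, the long arc $f^n(\gamma_u)$ must meet the same center plaque twice, which in the plane $\widetilde{f^n(L)}$ forces a singularity of the one-dimensional unstable foliation of the leaf --- a contradiction. You should replace your branching-elimination step with an argument of this length-versus-area kind; the rest of your outline then goes through.
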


To show Theorem \ref{thmA'} it is enough to reduce to the case of Theorem \ref{teoA} because of the following.

\begin{prop}
If $f$ acts quasi-isometrically on a branching center  foliation $\W^c$, then $\W^c$ is a true foliation.
\end{prop}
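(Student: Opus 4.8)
The plan is to show that a branching center foliation on which $f$ acts quasi-isometrically cannot have any branching points, by using the quasi-isometric hypothesis to rule out the two ways leaves of a branching foliation can come together. Recall that a branching center foliation $\W^c$ arises as the intersection of branching $cs$ and $cu$ foliations, and a branching point is, roughly, a pair of distinct center leaves $\ell \neq \ell'$ that share a common point $p$ (equivalently, a point through which more than one center leaf passes, or two center leaves that merge along a ray). The first step is therefore to recall the precise structure of branching foliations from \cite{BI08,BFP}: the branching leaves are complete $C^1$ curves tangent to $E^c$, any two of them that meet at a point either coincide or cross transversally-free (they stay together on one side and separate on the other), and the whole collection is obtained as a limit of approximating genuine foliations. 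I would set up the dichotomy that a branching point produces two center leaves $\ell, \ell'$ through $p$ that coincide on, say, the backward ray from $p$ but are distinct on the forward ray (the case of two leaves merging is symmetric).

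\smallskip

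Next I would lift everything to the universal cover $\widetilde M$ and work with $\widetilde\W^c$, $\widetilde\W^{cs}$, $\widetilde\W^{cu}$. The key mechanism is the following: suppose $\ell$ and $\ell'$ are two distinct lifted center leaves through a common point $\tilde p$, agreeing on one ray and separating on the other. Pick points $q \in \ell$ and $q' \in \ell'$ at center-distance exactly $r$ from $\tilde p$ on the separating side, where $r$ is the quasi-isometry constant from the definition. Because the two leaves agree before $\tilde p$ and separate after it, the segments $[\tilde p, q]_\ell$ and $[\tilde p, q']_{\ell'}$ are two center arcs of length $r$ sharing an endpoint. Applying a lift $\widetilde f^{\,n}$ (for the discrete-time map) and invoking the quasi-isometric action, both images have length at most $R$ for all $n \ge 0$, so the images of $q$ and $q'$ stay within center-distance $2R$ of $\widetilde f^{\,n}(\tilde p)$ uniformly in $n$. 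I would then argue that this uniform boundedness is incompatible with branching: the branching of the two leaves, transported by the holonomy of the center-stable (or center-unstable) foliation and amplified by the uniform contraction/expansion of $E^s$/$E^u$ transverse to the center, must force the two separating rays to diverge at a definite rate in the center metric, contradicting the uniform bound $R$. Concretely, I expect to use that a branch point of $\W^c = \W^{cs}\cap\W^{cu}$ corresponds to a branch point of one of $\W^{cs}$, $\W^{cu}$, and that stable/unstable holonomy between the two branches, under forward/backward iteration, moves the branch a definite transverse distance while the quasi-isometric hypothesis pins the center length — an analogue of the holonomy-continuity argument already used in the proof of Lemma \ref{lemmaleafspaceHsdff}.

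\smallskip

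The cleaner route, and the one I would actually carry out, is to reduce directly to the Hausdorffness argument of Lemma \ref{lemmaleafspaceHsdff}. A branching point of $\W^c$ inside a leaf $L$ of $\widetilde\W^{cs}$ is exactly a pair of non-separated leaves of the subfoliation $L_{\widetilde\W^c}$ meeting at a point; two center leaves that share a point but then branch are non-separated in the center leaf space $\mathcal O^c_L$. Since the proof of Lemma \ref{lemmaleafspaceHsdff} only used the quasi-isometric action on $\W^c$ together with the transverse continuity of the center foliation and the contraction along $\W^s$ — none of which requires $\W^c$ to be a genuine (non-branching) foliation — the same argument shows that no two center branches can be non-separated, and in particular that no two center leaves can meet and then split. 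This rules out branching and gives that $\W^c$ is a true foliation. I would state this as: the branch locus is empty because a branch would yield non-separated leaves in some $L_{\widetilde\W^c}$, contradicting the conclusion of (the branching-foliation version of) Lemma \ref{lemmaleafspaceHsdff}.

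\smallskip

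The main obstacle I anticipate is bookkeeping rather than conceptual: making rigorous the claim that a branch point of the center \emph{branching} foliation is detected as a pair of non-separated (or coincident-then-separating) center leaves inside a single $\widetilde\W^{cs}$- or $\widetilde\W^{cu}$-leaf, and checking that the holonomy/continuity step in Lemma \ref{lemmaleafspaceHsdff} goes through verbatim when the ambient objects are branching foliations (where leaves may share points and the transversals must be chosen with care so that ``intersecting both $I$ and $I'$'' still makes sense). I would therefore spend the bulk of the argument carefully translating the branching-foliation notion of a branch point into the separated/non-separated language on the center leaf space, after which the contradiction is immediate from the quasi-isometric bound on center length exactly as before.
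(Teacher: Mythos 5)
There is a genuine gap: your ``cleaner route'' --- reducing a branch point to a pair of non-separated leaves and invoking (a branching version of) Lemma \ref{lemmaleafspaceHsdff} --- does not go through, and the difficulty is not bookkeeping. Non-separated leaves are \emph{disjoint} leaves that cannot be separated by transversals, whereas branching leaves actually \emph{intersect}: they share a point $p$ and split on one side. The proof of Lemma \ref{lemmaleafspaceHsdff} derives its contradiction in the very last step by producing \emph{disjoint} subarcs $I$ and $I'$ of a single stable leaf with $I\cap W\neq\emptyset$ and $I'\cap W'\neq\emptyset$, using that a center leaf cannot cross a stable leaf twice. When $W$ and $W'$ meet at $p$, they meet the stable leaf $\widetilde{\W}^s(p)$ at the \emph{same} point, so no such disjoint subarcs exist and no contradiction is reached; the whole holonomy-contraction mechanism is simply inapplicable to intersecting leaves. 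Your first sketched mechanism (that the quasi-isometric bound should force the two branches to ``diverge at a definite rate in the center metric'') is also unsubstantiated: the quasi-isometric hypothesis caps center lengths, and nothing about a branch point forces center-length divergence of the two rays.

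The paper's actual argument is of a different nature (a length-versus-area argument in a planar leaf). A branch point of $\W^{c,L}$ in a leaf $L$ of $\W^{cu}$ bounds a ``triangle'' $\overline{V}$ with two center sides $\eta_c,\eta'_c$ issuing from $x$ and an unstable side $\eta_u$; crucially $\overline{V}$ has nonempty \emph{interior} and is swept out by center arcs of length at most $r$ from $x$ to $\eta_u$. The quasi-isometric hypothesis then traps $f^n(\overline{V})$ in the ball of radius $R$ about $f^n(x)$ for all $n$. Now take a nontrivial unstable arc $\gamma_u\subset V$: its length under $f^n$ tends to infinity while it stays in a region covered by a bounded number of foliation boxes, so a lift must cross the same center plaque twice inside the plane $\widetilde{L_n}$, forcing a singularity of the unstable subfoliation of a plane --- impossible. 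The two ingredients your proposal is missing are exactly (i) the passage from the one-dimensional branch to a two-dimensional region of uniformly bounded center-diameter, and (ii) the use of the unbounded \emph{expansion} of an unstable arc trapped in that region, rather than the contraction/holonomy continuity used in Lemma \ref{lemmaleafspaceHsdff}.
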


\begin{proof}
Suppose that $\W^c$ is the intersection of two branching foliations $\W^{cs}$ and $\W^{cu}$. To show that $\W^c$ is a true foliation it is enough to show that $\W^{cs}$ and $\W^{cu}$ are true foliations.

Every leaf $L$ of $\W^{cs}$ or $\W^{cu}$ is subfoliatied (a priori with branching) by the leaves of $\W^c$. Let us denote $\W^{c,L}$ to this foliation. It is enough to show that $\W^{c,L}$ is a true foliation for every $L$ in  $\W^{cs}$ or $\W^{cu}$.

Suppose by contradiction that $\W^{c,L}$ is not a true foliation for some $L$ in $\W^{cu}$ (for $L$ in $\W^{cs}$ the arguments are analogous). There exists a point $x$ in $L$ such that through $x$ there are several local leaves of $\W^{c,L}$. More precisely, one can consider small non-trivial $C^1$ segments $\eta_c$ and $\eta'_c$ contained in leaves of $\W^{c,L}$ and a small non-trivial $C^1$ segment $\eta_u$ in a leaf of $\W^u$ such that $x$ is one of the endpoints of $\eta_c$ and $\eta_c'$, and the other two endpoints of $\eta_c$ and $\eta_c'$ are the (different) endpoints of $\eta_u$. Informally, the strategy will be to use a volume versus length type of argument.

Let $V$ denote the interior of the `triangle' in $L$ whose sides are $\eta_c$, $\eta'_c$ and $\eta_u$, and let $\overline{V}$ denote its closure (which is equal to the union of $V$ with $\eta_c$, $\eta'_c$ and $\eta_u$). Note that $\overline{V}$ is equal to the union of every center segment $\gamma\subset \overline{V}$ joining $x$ to a point in $\eta_u$. There exists $r>0$ an upper bound for the length of every such a center segment. By the quasi-isometric action on $\W^c$ there exists $R>0$ such that the length of $f^n(\gamma)$ is bounded by $R>0$ for every integer $n$ and every such a center segment $\gamma$.

Note that $V$ has interior. Let $\gamma_u$ be a non-trivial segment of $\W^u$ contained in $V$. One has that $f^n(\gamma_u)$ lies in the ball of center $f^n(x)$ and radius $R$ for every integer $n$.

Let $L_n$ denote the leaf $f^n(L)$ and $\widetilde{L_n}$ its universal cover for every $n\in \mathbb{Z}$. Note that $\widetilde{L_n}$ is topologically a plane. Let $\W^{u,\widetilde{L_n}}$ denote the lift of the restriction of $\W^u$ to $L_n$ for every $n\in \mathbb{Z}$. 

Since one can cover the closed manifold $M$ by a finite number of compact foliation boxes of $\W^u$, then the every ball of radius $R$ in a leaf $L_n$ can be covered by a fixed finite number (independent of $n$) of foliation boxes of $\W^{u,\widetilde{L_n}}$ whose maximum length of plaque is uniformly bounded (independent of $n$). Since the length of $f^n(\gamma_u)$ tends to infinity with $n$, it follows that a lift of $f^n(\gamma_u)$ to $L_n$ (which is contained in a ball of radius $R$) must intersect two times the same center arc in $L_n$. As a consequence, since $L_n$ is topologically a plane then $\W^{u,\widetilde{L_n}}$ must have a singularity in $L_n$, which is impossible. 
\end{proof}

\appendix

\section{A result on self-orbit equivalences of Anosov flows}\label{appendix}

In this appendix we give a proof of Theorem \ref{thmBG} that came out in discussions with T. Barthelm\'e and A. Gogolev. They were kind to allow us reproduce the argument here.

Let $\phi_t$ be a transitive Anosov flow in dimension 3. Suppose that $f$ is a self-orbit equivalence of $\phi_t$ such that for every periodic orbit $o$ of $\phi_t$ there exists $n\geq 1$ such that $f^n(o)=o$. Let us see that there exists $N\geq 1$ such that $f^N$ is the trivial self-orbit equivalence. That is, such that $f^N(o)=o$ for every orbit $o$. Note that there is no loss in generality in assuming that every bundle is orientable and that $f$ preserves orientation along the weak foliations since this can be achieved by taking a finite lift and iterate.  

The proof will be carried out in the bifoliated plane $(\mathcal{O}_\phi,\mathcal{G}^s,\mathcal{G}^u)$ of $\phi_t$. We will assume some familiarity with it (see for example \cite{BFM22}).  Note that in the case when $\phi_t$ is orbit equivalent to a suspension the result is direct since these admit finitely many self-orbit equivalence classes, thus we will assume throughout that $M$ does not have virtually solvable fundamental group.

Given $F$ a lift to $\widetilde{M}$ of $f^n$ for some integer $n$, note that $\pi_1(M)$ and $F$ act both on $(\mathcal{O}_\phi,\mathcal{G}^s,\mathcal{G}^u)$. Let us denote by $G(F)$ the group of transformations of $(\mathcal{O}_\phi,\mathcal{G}^s,\mathcal{G}^u)$ generated by $\pi_1(M)$ and $F$. As mentioned before, we can and we will assume that these actions are orientation preserving. 

We say that an orbit $o$ in $\mathcal{O}_\phi$ is a \emph{periodic orbit} if it is the lift of a periodic orbit of $\phi_t$.

\begin{lemma}\label{lemma1appendix}
Suppose $F$ is a lift of $f^n$ for some integer $n$. If $\widetilde{o}\in \mathcal{O}_\phi$ is a periodic orbit then $G(F).\widetilde{o}$ is a discrete subset of $\mathcal{O}_\phi$.
\end{lemma}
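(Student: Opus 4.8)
The plan is to show that the entire orbit $G(F).\widetilde{o}$ is contained in the set of lifts of only \emph{finitely many} periodic orbits of $\phi_t$, and then to prove that such a set is discrete. First I would record how the two generating pieces of $G(F)$ act on periodic orbits of $\mathcal{O}_\phi$. Every $\gamma\in\pi_1(M)$ permutes the lifts of a fixed periodic orbit among themselves, since it covers the identity on $M$. On the other hand $F$ covers $f^n$, and $f^n$ carries a periodic orbit $o$ to the periodic orbit $f^n(o)$; hence $F$ sends a lift of $o$ to a lift of $f^n(o)$. Now I use the standing hypothesis of the appendix: since $o$ is $f$-periodic, its $f$-orbit $\{o=o_0,o_1,\dots,o_{m-1}\}$ (with $f^m(o)=o$) is finite, and $f^n$ maps this finite set into itself. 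Reading off any word in $\pi_1(M)$ and $F^{\pm1}$ applied to $\widetilde{o}$, one concludes that
\[
G(F).\widetilde{o}\ \subseteq\ \bigcup_{j=0}^{m-1}\{\text{lifts of }o_j\text{ in }\mathcal{O}_\phi\}.
\]

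The heart of the argument, which I would isolate as the main step, is the claim that the set $S$ of lifts of a \emph{single} periodic orbit $o$ is a closed, discrete subset of $\mathcal{O}_\phi$. Let $\Pi:\widetilde{M}\to\mathcal{O}_\phi$ be the orbit-space projection and let $\mathcal{P}=\pi^{-1}(o)\subset\widetilde{M}$ be the full preimage of the embedded circle $o$ under the covering $\pi:\widetilde{M}\to M$. Then $\mathcal{P}$ is a \emph{properly embedded} $1$-submanifold: for compact $K\subset\widetilde{M}$ the set $K\cap\mathcal{P}=K\cap\pi^{-1}(\pi(K)\cap o)$ is closed in $K$, hence compact, so $K$ meets only finitely many of the line components of $\mathcal{P}$. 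The components of $\mathcal{P}$ are exactly the lifts of $o$, and their images under $\Pi$ are the points of $S$. To pass from properness in $\widetilde{M}$ to discreteness in $\mathcal{O}_\phi$, suppose distinct points $p_k\in S$ satisfied $p_k\to q$ in $\mathcal{O}_\phi$. Since $\Pi$ is locally a trivial $\mathbb{R}$-bundle along the flow direction it admits continuous local sections, so fixing $x\in\Pi^{-1}(q)$ one can choose $x_k\in\Pi^{-1}(p_k)\subset\mathcal{P}$ with $x_k\to x$. Then $\{x\}\cup\{x_k\}$ is compact yet meets infinitely many distinct components of $\mathcal{P}$, contradicting properness. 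Hence $S$ has no accumulation point, and the same argument shows it is closed.

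Finally I would assemble the two pieces: a finite union of closed discrete sets is closed and discrete, because any accumulation point of the union would be an accumulation point of one of the finitely many sets $S_j=\{\text{lifts of }o_j\}$. Combined with the containment from the first paragraph this yields that $G(F).\widetilde{o}$ is discrete, proving the lemma. I expect the genuinely delicate point to be the passage from properness of $\mathcal{P}$ in $\widetilde{M}$ to discreteness of $S$ in $\mathcal{O}_\phi$, i.e.\ checking that convergence in the orbit space lifts to convergence in $\widetilde{M}$ via local sections of $\Pi$; everything else is bookkeeping about how $\pi_1(M)$ and $F$ permute the finite $f$-orbit of $o$. It is worth noting that the $f$-periodicity of $o$ is used in an essential way here, since for a non-$f$-periodic orbit the set on the right-hand side would involve infinitely many periodic orbits, whose lifts need not be discrete.
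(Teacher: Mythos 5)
Your proof is correct and follows essentially the same route as the paper's: confine $G(F).\widetilde{o}$ to the lifts of the finitely many periodic orbits in the $f$-orbit of $o$ (using the standing periodicity hypothesis), then observe that this set is discrete in $\mathcal{O}_\phi$. The only difference is cosmetic: the paper justifies discreteness by noting that a finite union of periodic orbits meets every small transversal in finitely many points, whereas you spell out the equivalent argument via properness of $\pi^{-1}(o)$ in $\widetilde{M}$ and local sections of the orbit projection.
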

\begin{proof}
Let $o$ be the periodic orbit whose lift is $\widetilde{o}$. Let $n>1$ be such that $f^n(o)=o$. Since the set $\bigcup_{0\leq i \leq n-1} f^i(o)$ is the union of finitely many periodic orbits it must intersect every small disc transverse to $\phi_t$ in a finite number of points. The orbit of $\widetilde{o}$ by $G(\widetilde{f})$ is equal to the lift of $\bigcup_{0\leq i \leq n-1} f^i(o)$ to $\widetilde{M}$. Thus it cannot accumulate at any point of $\mathcal{O}_\phi$.
\end{proof}

We say that $R\subset\mathcal{O}_\phi$ is a \emph{rectangle} if it is the compact region determined by the $\mathcal{G}^s$ and $\mathcal{G}^u$ leaves of two distinct points $x,y \in \mathcal{O}_\phi$ satisfying that $\mathcal{G}^s(x)\cap \mathcal{G}^u(y) \neq \emptyset$ and $\mathcal{G}^u(x)\cap \mathcal{G}^s(y) \neq \emptyset$. Note that inside $R$ the foliations $\mathcal{G}^s$ and $\mathcal{G}^u$ have global product structure.

\begin{lemma}\label{lemma2appendix}
Let $R$ be a rectangle whose corners are fixed by a lift $F$ of $f^n$ for some integer $n$. Then every point in $R$ is fixed by $F$.
\end{lemma}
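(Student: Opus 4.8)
The plan is to exploit the global product structure inside $R$ to reduce $F|_R$ to a product of two interval homeomorphisms, and then to kill any nontrivial motion using the density of periodic orbits together with the discreteness furnished by Lemma \ref{lemma1appendix}. First I would observe that $F$ preserves each of the four boundary leaves: writing the corners as $x,y$ and $z=\mathcal{G}^s(x)\cap\mathcal{G}^u(y)$, $w=\mathcal{G}^u(x)\cap\mathcal{G}^s(y)$, the fact that $F$ fixes each corner and preserves $\mathcal{G}^s$ and $\mathcal{G}^u$ forces $F(\mathcal{G}^s(x))=\mathcal{G}^s(x)$, and likewise for the other three leaves. Since the four sides are the subsegments of these leaves joining the fixed corners, $F$ maps each side to itself, hence $F(\partial R)=\partial R$ and therefore $F(R)=R$.

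Next, using the global product structure in $R$, I would fix a homeomorphism $\Phi\colon R\to[0,1]^2$ carrying the stable foliation to horizontal segments and the unstable foliation to vertical segments. Because $F$ sends stable leaves to stable leaves and unstable leaves to unstable leaves, the conjugate $\Phi F\Phi^{-1}$ respects both the horizontal and vertical foliations, so it has the product form $(s,u)\mapsto(\delta(s),\gamma(u))$ for some homeomorphisms $\delta,\gamma$ of $[0,1]$; orientation preservation together with the fixing of the four corners makes $\delta$ and $\gamma$ increasing homeomorphisms fixing $0$ and $1$. The claim then amounts to showing $\delta=\gamma=\mathrm{id}$.

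The heart of the argument is the following contradiction scheme. Suppose $\delta\neq\mathrm{id}$. Then $\mathrm{Fix}(\delta)$ is a proper closed subset of $[0,1]$ containing $\{0,1\}$, so its complement contains an open interval $(a,b)$ with $\delta(a)=a$, $\delta(b)=b$ on whose interior $\delta>\mathrm{id}$ (the case $\delta<\mathrm{id}$ being symmetric). The open set $\{s\in(a,b)\}\cap\mathrm{int}(R)$ is nonempty, and since $\phi_t$ is transitive its periodic orbits are dense in $M$, hence the lifts of periodic orbits are dense in $\mathcal{O}_\phi$; choose such a periodic orbit $\widetilde{o}$ in this open set, with coordinates $(s_0,u_0)$, $s_0\in(a,b)$. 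Then $F^k(\widetilde{o})$ has coordinates $(\delta^k(s_0),\gamma^k(u_0))$; here $\delta^k(s_0)$ is strictly increasing and bounded, so it converges to $b$, while $\gamma^k(u_0)$ is monotone and bounded, so it converges as well. Thus the points $F^k(\widetilde{o})$ are pairwise distinct (distinct first coordinates) and converge to an interior limit point of the compact rectangle $R$. Since $F^k\in G(F)$, this produces an accumulation point of $G(F).\widetilde{o}$ in $\mathcal{O}_\phi$, contradicting Lemma \ref{lemma1appendix}. Hence $\delta=\mathrm{id}$, and the identical argument in the unstable direction gives $\gamma=\mathrm{id}$, so $F|_R=\mathrm{id}$.

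I expect the main conceptual obstacle to be a clean justification of the product form of $F|_R$ — that is, genuinely upgrading "preserves both foliations and fixes the corners, orientation-preservingly" to the statement that $F$ decouples into two monotone interval maps — which relies on the global product structure inside $R$ noted after the definition of a rectangle. Once that normal form is in hand, the rest is the routine monotone-convergence observation, and the only dynamical input is the density of periodic orbits combined with Lemma \ref{lemma1appendix}; I would take care that the chosen periodic orbit lies strictly inside the non-fixed strip so that $\delta$ actually moves it.
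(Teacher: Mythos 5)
Your proposal is correct and follows essentially the same route as the paper: reduce $F|_R$ via the product structure to monotone interval maps, locate a gap in the fixed-point set where $F$ acts as a nontrivial translation, and derive an infinite, pairwise-distinct $F$-orbit of a periodic point inside the compact rectangle, contradicting Lemma \ref{lemma1appendix}. Your version is if anything slightly more careful than the paper's in insisting that the chosen periodic orbit lie inside $R$ itself.
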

\begin{proof}
Note that $R$ is sent by $F$ to a rectangle with the same corners. It follows that $R$ is sent to itself, that is, $R$ is invariant by $F$. Since $F$ preserves orientation, then it fixes all the corners too. 

Suppose by contradiction that $F:R\to R$ is not the identity map. There has to exist a subinterval $I$ of one of the sides of $R$ that is invariant by $F$ and such that $F:I\to I$ is conjugate to a nontrivial translation. Suppose without loss of generality that $I$ is a subinterval of a side contained in a leaf of $\mathcal{G}^s$.

Points in $\mathcal{O}_\phi$ corresponding to periodic orbits of $\phi_t$ are dense in $\mathcal{O}_\phi$ since $\phi_t$ is transitive. Thus there exists one of them, let us call it $\widetilde{o}$, such that $\mathcal{G}^u(\widetilde{o})$ intersects $I$. Since $F$ is a translation in $I$ then every point in the orbit of $\widetilde{o}$ by $F$ must lie in a different leaf of  $\mathcal{G}^u$. In particular, the orbit of $\widetilde{o}$ by $F$ must be infinite. Since this orbit is contained in the compact subset $R$ this contradicts Lemma \ref{lemma1appendix}.

\end{proof}

\begin{lemma}\label{lemma3appendix}
There exists a rectangle $R$ whose corners that are fixed by a lift of $f^n$ for some $n \geq 1$.
\end{lemma}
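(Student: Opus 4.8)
The plan is to reduce the statement to producing a single lift $F$ of some $f^n$ with two fixed points occupying opposite corners of a rectangle; the remaining two corners are then automatically fixed. Indeed, suppose $x$ and $y$ are fixed by $F$ and satisfy $\mathcal{G}^s(x)\cap \mathcal{G}^u(y)\neq\emptyset$ and $\mathcal{G}^u(x)\cap \mathcal{G}^s(y)\neq\emptyset$. Since $F$ preserves each of $\mathcal{G}^s,\mathcal{G}^u$ and, by the standing orientation assumptions, their orientations, it fixes the leaves $\mathcal{G}^s(x)$, $\mathcal{G}^u(y)$, $\mathcal{G}^u(x)$, $\mathcal{G}^s(y)$; as two transverse leaves of the bifoliated plane meet in at most one point, $F$ then fixes the two intersection points $\mathcal{G}^s(x)\cap\mathcal{G}^u(y)$ and $\mathcal{G}^u(x)\cap\mathcal{G}^s(y)$ as well. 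Thus it suffices to exhibit two fixed points of a common lift $F$ in rectangle position.

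First I would fix one corner canonically. Pick a periodic orbit $o$ of $\phi_t$ and a lift $p=\widetilde o\in\mathcal{O}_\phi$; by hypothesis $f^n(o)=o$ for some $n\geq 1$, so a lift of $f^n$ sends $p$ to a lift of $o$, and after composing with a deck transformation we obtain a lift $F_0$ of $f^n$ with $F_0(p)=p$. Let $\gamma\in\pi_1(M)$ be the primitive deck transformation associated to $o$: it fixes $p$ and acts as a saddle, contracting $\mathcal{G}^s(p)$ toward $p$ and expanding $\mathcal{G}^u(p)$ away from $p$. Because $F_0\gamma F_0^{-1}$ is again a deck transformation fixing $F_0(p)=p$, the stabiliser of $p$ being exactly $\langle\gamma\rangle$ forces $F_0\gamma F_0^{-1}=\gamma^{\pm1}$, and orientation preservation gives $F_0\gamma F_0^{-1}=\gamma$; hence $F_0$ commutes with $\gamma$. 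For every $j,k$ the map $\gamma^jF_0^{\,k}$ is again a lift of $f^{nk}$, still fixes $p$, and still commutes with $\gamma$; this freedom will be used to tune the dynamics along $\mathcal{G}^s(p)$ and $\mathcal{G}^u(p)$.

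To manufacture the second corner I would use transitivity: periodic points are dense in $\mathcal{O}_\phi$, so I can choose a periodic point $q$ lying in a fixed quadrant $Q$ of $p$ and forming a genuine rectangle $R_0$ with $p$, so that $R_0$ carries a product structure. Since $F_0$ fixes $p$ and preserves orientations it preserves $Q$, and by Lemma \ref{lemma1appendix} the orbit $\{F_0^{\,k}(q)\}_{k\in\mathbb{Z}}$ is a discrete subset of $\overline Q$. Monitoring the two coordinates of a point of $Q$, namely $\mathcal{G}^s(\cdot)\cap\mathcal{G}^u(p)$ and $\mathcal{G}^u(\cdot)\cap\mathcal{G}^s(p)$, one tracks how $H=\gamma^jF_0^{\,k}$ moves the corners of $R_0$ along the rays of $\mathcal{G}^u(p)$ and $\mathcal{G}^s(p)$. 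Combining the expansion/contraction of $\gamma$ along these rays with the monotone action of $F_0$ and with the discreteness of the $F_0$-orbit, the aim is to select $j,k$ so that a compact product rectangle is mapped into (or across) itself by $H$, whence a Brouwer/index argument yields a fixed point of $H$ distinct from $p$ and in rectangle position with it.

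The main obstacle is exactly this last step: producing an \emph{honest} fixed point of $H$ — not merely a periodic $H$-orbit — in rectangle position with $p$, while the bifoliated plane need not possess global product structure (non-separated leaves and lozenges may occur). The point of commuting with $\gamma$ is to furnish a compact fundamental domain, so that candidate corners cannot simply escape to the ends of $\mathcal{G}^s(p)$ or $\mathcal{G}^u(p)$; the point of Lemma \ref{lemma1appendix} is that the periodic orbits used as corners have discrete $G(F)$-orbits, so that after passing to a power they become genuinely fixed rather than merely recurrent. Making the Brouwer-type argument rigorous inside a compact product rectangle, ruling out the escaping configurations, and verifying that the resulting configuration is non-degenerate (four distinct corners with the two required intersection points) is where the real work lies.
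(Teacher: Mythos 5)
Your opening reduction is fine and matches what the paper does implicitly: once a lift $F$ of some $f^n$ fixes two points $x,y$ in rectangle position, it fixes the four leaves $\mathcal{G}^s(x),\mathcal{G}^u(x),\mathcal{G}^s(y),\mathcal{G}^u(y)$ and hence the remaining two corners. The construction of the first corner (a lift $F_0$ of $f^n$ fixing a lift $p$ of a periodic orbit, commuting with the primitive deck transformation $\gamma$ stabilising $p$) is also correct and is the same starting point as the paper's.

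The gap is exactly where you say it is, and it is the entire content of the lemma: you never actually produce the second fixed corner. A Brouwer or index argument on a two-dimensional product rectangle faces real obstructions here --- $H=\gamma^jF_0^{\,k}$ need not map any compact product rectangle into itself, a fixed point produced by an index argument could coincide with $p$ or be degenerate, and the bifoliated plane of a general transitive Anosov flow has lozenges and non-separated leaves that break global product structure. Announcing that ``the real work lies'' there is an accurate self-assessment, not a proof. The paper avoids the two-dimensional problem entirely by working one ray at a time. On the ray $\mathcal{G}^s_+(x)$ the deck transformation $\gamma$ acts with compact fundamental domain $I=[w,\gamma(w)]$. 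One picks a periodic point $o$ inside the lozenge whose unstable leaf meets $I$ at $w'$, and for each $k\geq 1$ chooses $m$ so that $\gamma^m F^k(w')\in I$. If no point of the ray were fixed by any $\gamma^mF^k$, these projected points would be pairwise distinct, so the corresponding points $\gamma^mF^k(o)$ would form an infinite subset of a fixed compact rectangle, contradicting the discreteness of the $G(F)$-orbit of a periodic point (Lemma \ref{lemma1appendix}). This yields a fixed point $z$ of some $\gamma^mF^k$ on the stable ray; repeating on the unstable ray gives $z'$, and the fourth corner $y=\mathcal{G}^u(z)\cap\mathcal{G}^s(z')$ comes for free by your own opening observation. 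If you want to salvage your write-up, replace the Brouwer step with this one-dimensional pigeonhole argument; as it stands the proposal identifies the right ingredients but does not prove the statement.
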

\begin{proof}
Suppose that $\phi_t$ is not a suspension. Let $x$ be a corner of a lozenge such that $\gamma.x=x$ for some non trivial element $\gamma \in \pi_1(M)$. Let $F$ be a lift of $f^n$ for some $n\geq 1$ such that $F(x)=x$.

We claim that there exists $z\in \mathcal{G}^s_+(x)$ such that $\gamma^m\circ F^k (z)=z$ for some integers $k\geq 1$ and $m$. Indeed, by contradiction, suppose this does not happens. Let $w$ be a point in $\mathcal{G}^s_+(x)$ and let $I$ denote the segment in $\mathcal{G}^s_+(x)$ joining $w$ and $\gamma (w)$. Consider $o$ a periodic orbit in the interior of the lozenge such that $\mathcal{G}^u$ intersects $I$, say that in a point $w'$. For every $k\geq 1$ we can consider $m$ such that $\gamma^m\circ f^k(w')$ lies in $I$. By assumption these points are all different from each other. As a consenquence, for every $k\geq 1$ the points $\gamma^m\circ f^k(o)$ are also all different from each other. These points lie in the compact rectangle whose corners are the intersection of the $\mathcal{G}^u$ leaves though the endpoints of $I$ with $\mathcal{G}^s_+(x)$ and its opposite side of the lozenge. This contradicts Lemma \ref{lemma1appendix} and proves the claim.

So there exists $z\in \mathcal{G}^s_+(x)$ such that $\gamma^m\circ F^k (z)=z$ for some integers $k\geq 1$ and $m$. The map $\gamma^m\circ F^k$ is a lift of $f^{nk}$. By a slight abuse of notation let us rename it by $F$. Then $F$ is a lift of $f^{nk}$ such that $x$ and $z\in \mathcal{G}^s_+(x)$ are fixed by it. By a reasoning analogous to that of the previous claim there must exist a point $z'$ in $\mathcal{G}^u_+(x)$ such that $\gamma^{m'}\circ F^{k'} (z)=z$ for some integers $k'\geq 1$ and $m'$. The map $\gamma^{m'}\circ F^{k'}$ is now a lift of $f^{nkk'}$. Let us rename it by $F$.

It follows that $F$ fixes $x$ and the point $z\in \mathcal{G}^s_+(x)$ and $z'\in \mathcal{G}^u_+(x)$, so it also fixes $y=\mathcal{G}^u(z)\cap \mathcal{G}^s(z')$. The rectangle whose corners are $x$ and $y$ satisfies then the thesis of the lemma for $F$ the lift of $f^{nkk'}$.
\end{proof}

\begin{proof}[Proof of Theorem \ref{thmBG}]
By Lemma \ref{lemma3appendix} there exists a rectangle $R$ with two opposite corners $x$ and $y$ which are fixed by $F$ a lift of $f^n$ for some positive integer $n$. By Lemma \ref{lemma2appendix} every point in $R$ is fixed by $F$. Let us show that every point in $\mathcal{O}_\phi$ needs to be fixed by $F$. As a direct consequence, $f^n$ needs to be the trivial self-orbit equivalence of $\phi_t$.

Suppose $o$ in $\mathcal{O}_\phi$ is a periodic point such that $\mathcal{G}^s(o)$ intersects $R$. The leaf $\mathcal{G}^s(o)$ has to be invariant by $F$ since the points in  $\mathcal{G}^s(o)\cap R$ are fixed by $F$. Since $F$ takes periodic orbits to periodic orbits, and $\mathcal{G}^s(o)$ contains only one, then $o$ needs to be fixed by $F$.

Now, let $z$ be a point such that $\mathcal{G}^s(z)$ intersects the interior of $R$. Since the flow is transitive, periodic orbits are dense in $\mathcal{O}_\phi$, so one can consider a sequence of periodic orbit $o_n$ coverging to $z$. By continuity of $F$ it follows that $z$ is also fixed by $F$.

We have shown that every point in the saturation of $R$ by leaves of $\mathcal{G}^s$ is fixed by $F$. In the same fashion one can show that the saturation by leaves of $\mathcal{G}^u$ of this set is made of fixed points of $F$. Inductively one obtains that every point in $\mathcal{O}_\phi$ is fixed by $F$.
\end{proof}

\bibliographystyle{alpha}
\bibliography{references}

\begin{thebibliography}{BGHP20}

\bibitem[AM23]{AM23}
S.~Allout and K.~Moghaddamfar.
\newblock On partially hyperbolic diffeomorphisms in dimension three via a
  notion of autonomous dynamics.
\newblock {\em Bull. Soc. Math. Fr.}, 151(4):613--646, 2023.

\bibitem[BB16]{BB16}
D.~Bohnet and C.~Bonatti.
\newblock Partially hyperbolic diffeomorphisms with a uniformly compact center
  foliation: the quotient dynamics.
\newblock {\em Ergodic Theory and Dynamical Systems}, 36(4):1067–1105, 2016.

\bibitem[BFM22]{BFM22}
T.~Barthelm{\'e}, S.~Frankel, and K.~Mann.
\newblock Orbit equivalences of pseudo-{Anosov} flows.
\newblock {\em https://arxiv.org/abs/2211.10505}, 2022.

\bibitem[BFP23]{BFP}
T.~Barthelm{\'e}, S.~Fenley, and R.~Potrie.
\newblock Collapsed {Anosov} flows and self orbit equivalences.
\newblock {\em Comment. Math. Helv.}, 98(4):771--875, 2023.

\bibitem[BG21]{BG2021}
T.~Barthelmé and A.~Gogolev.
\newblock Centralizers of partially hyperbolic diffeomorphisms in dimension 3.
\newblock {\em Discrete and Continuous Dynamical Systems}, 41(9):4477--4484,
  2021.

\bibitem[BGHP20]{BGHP20}
C.~Bonatti, A.~Gogolev, A.~Hammerlindl, and R.~Potrie.
\newblock Anomalous partially hyperbolic diffeomorphisms {III}: Abundance and
  incoherence.
\newblock {\em Geometry and Topology}, 24(4):1751 -- 1790, 2020.

\bibitem[BGP16]{BGP16}
C.~Bonatti, A.~Gogolev, and R.~Potrie.
\newblock Anomalous partially hyperbolic diffeomorphisms {II}: stably ergodic
  examples.
\newblock {\em Invent. Math.}, 206(3):801--836, 2016.

\bibitem[BI08]{BI08}
D.~Burago and S.~Ivanov.
\newblock Partially hyperbolic diffeomorphisms of 3-manifolds with abelian
  fundamental groups.
\newblock {\em Journal of Modern Dynamics}, 2(4):541--580, 2008.

\bibitem[Boh13]{B13}
D.~Bohnet.
\newblock Codimension-1 partially hyperbolic diffeomorphisms with a uniformly
  compact center foliation.
\newblock {\em Journal of Modern Dynamics}, 7(4):565–604, 2013.

\bibitem[BPP16]{BPP16}
C.~Bonatti, K.~Parwani, and R.~Potrie.
\newblock Anomalous partially hyperbolic diffeomorphisms {I}: Dynamically
  coherent examples.
\newblock {\em Annales Scientifiques de l École Normale Supérieure}, 49, 11
  2016.

\bibitem[BW05]{BW05}
C.~Bonatti and A.~Wilkinson.
\newblock Transitive partially hyperbolic diffeomorphisms on 3-manifolds.
\newblock {\em Topology}, 2005.

\bibitem[BZ19]{BZ19}
C.~Bonatti and J.~Zhang.
\newblock Transitive partially hyperbolic diffeomorphisms with one-dimensional
  neutral center.
\newblock {\em Journal of Modern Dynamics}, 2019.

\bibitem[Can93]{C93}
A.~Candel.
\newblock Uniformization of surface laminations.
\newblock {\em Ann. Sci. {\'E}c. Norm. Sup{\'e}r. (4)}, 26(4):489--516, 1993.

\bibitem[CP15]{CPNotas15}
S.~Crovisier and R.~Potrie.
\newblock Introduction to partially hyperbolic dynamics.
\newblock {\em Trieste Lecture Notes ICTP. Available in the web pages of the
  authors.}, 2015.

\bibitem[CP22]{CP22}
S.~Crovisier and M.~Poletti.
\newblock Invariance principle and non-compact center foliations.
\newblock Preprint, {arXiv}:2210.14989, 2022.

\bibitem[CPRH21]{CPRH21}
P.~Carrasco, E.~Pujals, and F.~Rodriguez-Hertz.
\newblock Classification of partially hyperbolic diffeomorphisms under some
  rigid conditions.
\newblock {\em Ergodic Theory and Dynamical Systems}, 41(9):2770–2781, 2021.

\bibitem[Fen24]{F24}
Z.~Feng.
\newblock Partially hyperbolic dynamics with quasi-isometric center.
\newblock 2024.

\bibitem[FP22]{FP2022}
S.~Fenley and R.~Potrie.
\newblock Ergodicity of partially hyperbolic diffeomorphisms in hyperbolic
  3-manifolds.
\newblock {\em Advances in Mathematics}, 401:108315, 2022.

\bibitem[FP23]{FP23}
S.~Fenley and R.~Potrie.
\newblock Intersection of transverse foliations in 3-manifolds: Hausdorff
  leafspace implies leafwise quasi-geodesic.
\newblock {\em https://arxiv.org/abs/2310.05176}, 2023.

\bibitem[HH87]{HH87}
G.~Hector and U.~Hirsch.
\newblock {\em Introduction to the geometry of foliations. {Part} {B}:
  {Foliations} of codimension one.}, volume~E3 of {\em Aspects Math.}
\newblock Braunschweig etc.: Friedr. Vieweg \&| Sohn, 2nd ed. edition, 1987.

\bibitem[HP15]{HP15}
A.~Hammerlindl and R.~Potrie.
\newblock Classification of partially hyperbolic diffeomorphisms in 3-manifolds
  with solvable fundamental group.
\newblock {\em Journal of Topology}, 8(3):842--870, 2015.

\bibitem[HP18]{HP18}
A.~Hammerlindl and R.~Potrie.
\newblock Partial hyperbolicity and classification: a survey.
\newblock {\em Ergodic Theory and Dynamical Systems}, 38(2):401–443, 2018.

\bibitem[HPS77]{HPS77}
M.~Hirsch, C.~Pugh, and M.~Shub.
\newblock Invariant manifolds.
\newblock {\em \em Lecture Notes in Math., 583, Springer-Verlag, New York},
  1977.

\bibitem[Mar23]{M23}
S.~Martinchich.
\newblock Global stability of discretized anosov flows.
\newblock {\em Journal of Modern Dynamics}, 19(0):561--623, 2023.

\bibitem[MM22]{MM22}
M.~Mion-Mouton.
\newblock Partially hyperbolic diffeomorphisms and lagrangian contact
  structures.
\newblock {\em Ergodic Theory and Dynamical Systems}, 42(8):2583–2629, 2022.

\bibitem[Pot14]{P14}
R.~Potrie.
\newblock A few remarks on partially hyperbolic diffeomorphisms of
  {$\mathbb{T}^3$} isotopic to anosov.
\newblock {\em Journal of Dynamics and Differential Equations}, 3(26):805--815,
  2014.

\bibitem[RRU11]{RHRHU11}
F.~Rodriguez, M.~Rodriguez, and R.~Ures.
\newblock Tori with hyperbolic dynamics in 3-manifolds.
\newblock {\em Journal of Modern Dynamics}, 5(1):185--202, 2011.

\bibitem[Sha21]{S21}
M.~Shannon.
\newblock Hyperbolic models for transitive topological anosov flows in
  dimension three.
\newblock {\em https://arxiv.org/abs/2108.12000}, 2021.

\end{thebibliography}

\end{document}